\newcommand\NoBlackBoxes{\global\overfullrule0pt}
\theoremstyle{plain} 
\newcommand{\IR}{\mathbb{R}}
\newcommand{\IN}{\mathbb{N}}
\newtheorem{theorem}{Theorem}[section]
\newtheorem{lemma}[theorem]{Lemma}
\newtheorem{corollary}[theorem]{Corollary}
\newtheorem{proposition}[theorem]{Proposition}
\theoremstyle{definition} 
\newcommand{\abs}[1]{\left\lvert #1 \right\rvert}  
\newcommand{\bigabs}[1]{\bigl\lvert #1 \bigr\rvert}  
\newcommand{\Bigabs}[1]{\Bigl\lvert #1 \Bigr\rvert}  
\newcommand{\norm}[1]{\left\lVert #1 \right\rVert}
\numberwithin{equation}{section}
\begin{document}
\title{Freeness of linear and quadratic forms 
in von Neumann algebras.} 
\author[G. P. Chistyakov]{G. P. Chistyakov$^{1,2,4}$}
\thanks{1) Faculty of Mathematics, 
University of Bielefeld, Germany.} 
\thanks{2) Institute for Low Temperature Physics and Engineering, 
Kharkov, Ukraine}
\thanks{3) Institut f\"ur Mathematische Strukturtheorie, Technische Universit\"at Graz}
\thanks{4) Research supported by SFB 701.}
\address{Gennadii Chistyakov \newline
Fakult\"at f\"ur Mathematik\newline
Universit\"at Bielefeld\newline
Postfach 100131\newline
33501 Bielefeld \newline
Germany}
\email {chistyak@math.uni-bielefeld.de, \quad chistyakov@ilt.kharkov.ua} 
\author[F. G\"otze]{F. G\"otze$^{1,4}$}
\address
{Friedrich G\"otze\newline
Fakult\"at f\"ur Mathematik\newline
Universit\"at Bielefeld\newline
Postfach 100131\newline
33501 Bielefeld \newline
Germany}
\email {goetze@math.uni-bielefeld.de}
\author[F. Lehner]{F. Lehner$^{3,4}$}
\address
{Franz Lehner \newline
Institut f\"ur Mathematische Strukturtheorie\newline 
Technische Universit\"at Graz\newline 
Steyrergasse 30, A-8010 Graz\newline  
Austria}  
\email{lehner@math.tu-graz.ac.at} 
\date{\today{}}
\keywords{ Free random variables, free convolutions, a characterization
of the semicircle law}
\subjclass{
Primary 46L50, 60E07; Secondary 60E10}  
\markboth{Free linear and quadratic forms}{G. P. Chistyakov, 
F. G\"otze and F.Lehner}
\leftmark
\rightmark
\begin{abstract}
  We characterize the semicircular distribution by freeness 
  of linear and quadratic forms in noncommutative random 
  variables from tracial $W^*$-probability spaces with 
  relaxed moment conditions.
\end{abstract}
\maketitle
\section{Introduction} 
The~intensive research in the~asymptotic theory of random matrices
has motivated increased research on infinitely dimensional limiting models. 
Free convolution of probability measures, 
introduced by D. Voiculescu, may be regarded as such a model~\cite{Vo:1986},
\cite{Vo:1987}.
The~key concept of this definition is the~notion of freeness,
which can be interpreted as a~kind of independence for noncommutative
random variables. As in classical probability the~concept of
independence gives rise to classical convolution, the~concept
of freeness leads to a~binary operation on probability measures
on the~real line which is called free convolution. Many classical results
in the~theory of addition of independent random variables have their
counterpart in this theory, such as the~law of large numbers,
the~central limit theorem, the~L\'evy-Khintchine formula and others.
We refer to Voiculescu, Dykema and Nica~\cite{Vo:1992}, Hiai and Petz~\cite{HiPe:2000},
and Nica and Speicher~\cite{NiSp:2006} for an~introduction to these topics. 

The~central limit theorem for free random variables holds with limit distribution
equal to a semicircle law. Semicircle laws play in many respects the role of Gaussian laws, 
when independence is replaced by freeness in a noncommutative probability space.  

In usual probability theory various characterizations of the Gaussian law have been 
obtained, for instance see~\cite{KLR:1973}. In particular, there is the well-known fact that 
the independence of the sample mean and the simple variance of independent identically 
distributed random variables characterizes the Gaussian laws, see~\cite{Z:1951} and \cite{KS:1949}.

Hiwatashi, Nagisa and Yoshida~\cite{HiNaYo:1999} established the characterization
of the semicircle law by freeness of a certain pair of a linear and a quadratic form 
in free identically distributed bounded noncommutative random variables, which covers
the free analogue of the previous result in usual probability theory. 

In this paper we generalize the Hiwatashi, Nagisa and Yoshida result 
to the case of not necessarily bounded identically distributed
noncommutative random variables
requiring only finiteness of the second moment.

Unbounded operators affiliated to a von Neumann algebra play the role of unbounded
measurable random variables in noncommutative probability. A general theory of such
operators has been developed already by Murray and Neumann~\cite{MuNe:1936}. In free probability
unbounded random variables have so far only been considered by Maassen~\cite{Ma:1992} from 
the analytic point of view and by Bercovici and Voiculescu~\cite{BeVo:1993} in great detail.

The plan of the present paper is as follows.
In Section~2 we formulate our results. In Section~3 we give auxiliary
results on measurable operators. In Section~4 we prove auxiliary analytic
results. Finally in Section~5 we prove our main result
by carefully adapting classical moment estimates to the noncommutative
situation.


\emph{Acknowledgements}. The authors thank the anonymous referee
for numerous remarks, in particular pointing out an error in
Proposition~\ref{pr2.2}.

\section{Results}
Assume that $\mathcal A$ is a~finite von Neumann algebra with normal faithful
trace state $\tau$  acting on a~Hilbert space $H$. 
The~pair $(\mathcal A,\tau)$ will be called a~{\it tracial}
$W^*$-{\it probability space}. 
We will denote  by $\tilde {\mathcal A}$ the~set of all operators
on $H$ which are affiliated with $\mathcal A$ and by 
$\tilde{\mathcal A}_{sa}$ its real subspace
of selfadjoint operators. 
Recall that a (generally unbounded) selfadjoint
operator $X$ on $H$ is affiliated with $\mathcal A$ if all 
the~spectral projections
of $X$ belong to $\mathcal A$. The~elements of $\tilde{\mathcal A}_{sa}$ 
will be regarded as (possibly) unbounded random variables. 
The set $\tilde{\mathcal A}$ is actually an algebra,
as shown by Murray and von Neumann~\cite{MuNe:1936}, and  the usual problems
concerning domains of definition are settled once  for all.
The~distribution $\mu_{T}$ of an element $T\in \tilde{\mathcal A}_{sa}$
is the~unique  probability measure 
on $\IR$ satisfying the~equality
$$
\tau(u(T))=\int\limits_{\mathbb R}u(\lambda)\,\mu_T(d\lambda)
$$
for every bounded Borel function $u$ on $\mathbb R$. 

A~family $(T_j)_{j\in I}$ of elements of 
$T\in \tilde{\mathcal A}_{sa}$ 
is said
to be \emph{free} if for all bounded continuous functions $u_1,u_2,\dots,u_n$
on $\mathbb R$ we have $\tau(u_1(T_{j_1})u_2(T_{j_2})\dots u_n(T_{j_n}))=0$
whenever $\tau(u_l(T_{j_l}))=0,\,l=1,\dots,n$, 
for every choice of alternating indices
$j_1,j_2,\dots,j_n$. 

Denote by $\mu_{w_{m,r}}$ the~semicircle distribution with density 
$\frac 2{\pi r^2}\sqrt{(r^2-(x-m)^2)_+}$,
where $m\in\mathbb R,r\in\mathbb R_+$ and $a_+:=\max\{a,0\}$ 
for $a\in\mathbb R$. This distribution plays the~role of 
Gaussian one, when independence is replaced by freeness. 

The~main aim of this note is to prove the~following characterization theorem.

\begin{theorem}\label{th2.1}
Let $T_1,T_2,\dots,T_n$ be free identically distributed random variables 
with zero expectations,
$\tau(T_j)=0$, and $\tau(T_j^2)<\infty$ in $W^*$-probability 
space $(\mathcal A,\tau)$. 
Let $A=(a_{ij})\in M_n(\mathbb R)$ be an~$n\times n$ symmetric 
real matrix 
and $\mathbf b=\sideset{^t}{}{\mathop{(}}b_1,b_2,\dots,b_n)\in\mathbb R^n$ 
be an $n$-dimensional vector satisfying the~conditions
\begin{equation}\label{re.1}
A\mathbf b=\mathbf 0\quad\text{and}\quad \sum_{j=1}^nb_j^ma_{jj}\ne 0
\quad\text{for}\quad m\in\mathbb N.
\end{equation}
Then the~linear form $L=\sum_{j=1}^nb_jT_j$ and the~quadratic 
form $Q=\sum_{j,k}^na_{jk}T_jT_k$
are free if and only if $T_1$ has semicircle distribution. 
\end{theorem}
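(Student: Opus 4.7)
My plan is to treat the two directions of the equivalence separately, with the bulk of the work in the ``only if'' part.

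Sufficiency is the more straightforward direction. If $T_1$ is semicircular with $\tau(T_1^2)=\sigma^2$, its only nonzero free cumulant is $\kappa_2=\sigma^2$. Expanding any alternating trace $\tau(P_1(L)Q_1(Q)P_2(L)Q_2(Q)\cdots)$ with $\tau(P_i(L))=\tau(Q_i(Q))=0$ via the moment--cumulant formula for free i.i.d.\ variables, only non-crossing pair partitions of the underlying $T$-positions survive. The condition $A\mathbf b=\mathbf 0$ makes every pair joining an $L$-position to a $Q$-position contribute zero, because the resulting coefficient sum factors through $\mathbf b^T A\mathbf b=0$. What remains factorizes over the separate $L$- and $Q$-blocks, producing the freeness of $L$ and $Q$. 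This is essentially the computation of \cite{HiNaYo:1999}, legitimate here because all semicircular moments are finite.

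For the necessity, I would derive the sequence of identities
\begin{equation*}
\tau(L^k Q)=\tau(L^k)\,\tau(Q),\qquad k=0,1,2,\dots,
\end{equation*}
from the freeness of $L$ and $Q$, and read off $\kappa_m=0$ for $m\ge 3$ by induction on $m$. Since $T_j$ is only assumed to lie in $L^2$, these identities cannot be taken at face value; freeness must be invoked in its bounded form $\tau(f(L)g(Q))=\tau(f(L))\tau(g(Q))$ with $f,g$ bounded Borel. The auxiliary material of Sections~3 and~4 should supply the approximation machinery---spectral truncations and resolvent identities such as those for $\tau((z-L)^{-1}Q)$ with $z\in\mathbb C\setminus\mathbb R$---that converts bounded-function freeness into genuine moment identities as the relevant moments of $T_1$ can, step by step, be shown to exist.

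The combinatorial core rests on the moment--cumulant formula
\begin{equation*}
\tau(T_{i_1}\cdots T_{i_n})=\sum_{\pi\in NC(n),\,\pi\le\ker\mathbf i}\prod_{B\in\pi}\kappa_{|B|},
\end{equation*}
which, applied to both sides of $\tau(L^k Q)=\tau(L^k)\tau(Q)$, sorts the non-crossing partitions of $\{1,\dots,k+2\}$ into three groups. First, the one-block partition contributes $\kappa_{k+2}\sum_j b_j^k a_{jj}$ to the left-hand side and nothing to the right. Second, those partitions in which $\{k+1,k+2\}$ is itself a block reassemble exactly into $\tau(L^k)\tau(Q)$. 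Third, under the inductive hypothesis $\kappa_3=\cdots=\kappa_{k+1}=0$, every remaining partition either contains a block of size in $\{3,\dots,k+1\}$ and hence vanishes, or is a pair partition in which positions $k+1$ and $k+2$ lie in two distinct pairs each joining an $L$-position to a $Q$-position; the joint coefficient sum of the latter factors through $\mathbf b^T A\mathbf b=0$. The identity therefore collapses to $\kappa_{k+2}\sum_j b_j^k a_{jj}=0$, and the hypothesis $\sum_j b_j^k a_{jj}\ne 0$ forces $\kappa_{k+2}=0$. The base case $\kappa_3=0$ already follows from $\tau(LQ)=0$; together with $\kappa_1=0$ and $\kappa_2=\tau(T_1^2)$, the vanishing of all $\kappa_m$ for $m\ge 3$ characterizes $T_1$ as semicircular.

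The main obstacle, beyond the bookkeeping of the cancellations driven by $A\mathbf b=\mathbf 0$, is the lack of higher moments at the outset. I expect this to be handled by combining the spectral-truncation arguments of Section~3 with analytic identities for resolvents, which are valid on $\mathbb C\setminus\mathbb R$ without any a~priori moment hypothesis; successive differentiations in $z$, accompanied by the inductive improvement of moment bounds on $T_1$, should extract the polynomial identities above precisely as they acquire meaning. This is the step where the ``careful adaptation of classical moment estimates to the noncommutative situation'' announced in the introduction becomes decisive.
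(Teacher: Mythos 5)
Your combinatorial outline (both directions via the moment--cumulant formula, with the cancellation of mixed pairings coming from $A\mathbf b=\mathbf 0$ and the surviving one-block term giving $\kappa_{k+2}\sum_j b_j^k a_{jj}=0$) is essentially the part of the argument that the paper does \emph{not} redo: it cites Hiwatashi--Nagisa--Yoshida and Lehner~\cite{Le:2003} for exactly this, once all moments are known to be finite. Two small remarks on that part: the cancellation needs the componentwise condition $(A\mathbf b)_j=\sum_k a_{jk}b_k=0$ for each $j$, not merely the scalar $\mathbf b^{T}A\mathbf b=0$, since the residual factors attached to the other blocks of a pairing depend on the indices $j,k$; and the hypothesis $\sum_j b_j^m a_{jj}\ne0$ is quoted for all $m\in\mathbb N$, which is what lets the induction run at every order.

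The genuine gap is the moment problem, which is the actual content of the paper and which you leave as an expectation (``I expect this to be handled by \dots resolvent identities \dots successive differentiations in $z$''). The paper does not use resolvents or differentiation of Cauchy transforms at all, and it is not clear that such a scheme closes: differentiating $\tau((z-L)^{-1}g(Q))$ at finite $z$ does not by itself produce the moment identities you need without already controlling tails. What the paper actually does (Theorem~\ref{th2.2}) is an induction on $d$: write $QL^{d-1}=\sum_j a_{jj}b_j^{d-1}T_j^{d+1}+(\text{mixed monomials } T_{k_1}^{n_1}\dotsm T_{k_s}^{n_s})$; use the freeness of $L$ and $Q$ to identify the distribution of $\abs{QL^{d-1}}^2$ as $\mu_{Q^2}\boxtimes\mu_{L^{2(d-1)}}$ and deduce $\tau(\abs{QL^{d-1}})<\infty$ from fractional-moment estimates for $\boxtimes$ (Propositions~\ref{pr2.1}, \ref{pr2.2} and Lemma~\ref{l2.2}, i.e.\ the Krein-function and subordination machinery of Section~4 --- this is the only place the freeness hypothesis is used analytically); show via Lemma~\ref{lem5.1} that every mixed monomial is automatically in $L^1$ using only the moments already established; conclude by Minkowski that the free sum $\sum_j a_{jj}b_j^{d-1}T_j^{d+1}$ is in $L^1$; and finally invoke Lemma~\ref{l2.3} (if a sum of free variables has finite first absolute moment, so does each summand) together with $b_ja_{jj}\ne0$ for some $j$ to get $\tau(\abs{T_1}^{d+1})<\infty$. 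None of these ingredients --- the isolation of the diagonal term, the $\boxtimes$-based control of $\tau(\abs{QL^{d-1}})$ and of the mixed monomials, and the additive-convolution transfer lemma --- appears in your proposal, so the necessity direction is not established by it.
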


\begin{corollary}\label{cor2.1}
Let $T_1,T_2,\dots,T_n$ be free identically distributed random variables 
with zero expectations,
$\tau(T_j)=0$, and $\tau(T_j^2)<\infty$ in $W^*$-probability 
space $(\mathcal A,\tau)$. 
Then the~sample mean $\overline{T}=\frac 1n\sum_{j=1}^nT_j$ and the~sample variance 
$V=\frac 1n\sum_{j=1}^n(T_j-\overline{T})^2$ are free if and only 
if $T_1$ has semicircle distribution.  
\end{corollary}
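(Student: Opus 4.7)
The plan is to deduce Corollary~\ref{cor2.1} directly from Theorem~\ref{th2.1} by identifying the sample mean and sample variance with a linear form $L$ and a quadratic form $Q$ that satisfy the hypotheses \eqref{re.1}. So the strategy is: write both objects in the prescribed form, check that the associated matrix $A$ and vector $\mathbf b$ satisfy the two conditions in \eqref{re.1}, and then invoke Theorem~\ref{th2.1}.

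First I would take $\mathbf b = \sideset{^t}{}{\mathop{(}}\tfrac1n,\tfrac1n,\dots,\tfrac1n)$, so that $L = \sum_j b_j T_j = \overline T$. Next I would rewrite $V$ as a quadratic form in the $T_j$. Expanding $(T_j - \overline T)^2 = T_j^2 - T_j\overline T - \overline T T_j + \overline T^2$ and summing over $j$, the cross terms collapse because $\frac1n\sum_j T_j\overline T = \overline T^2 = \frac1n\sum_j \overline T T_j$, leaving
\begin{equation*}
V = \frac1n\sum_{j=1}^n T_j^2 - \overline T^{\,2}
  = \frac1n\sum_{j=1}^n T_j^2 - \frac1{n^2}\sum_{j,k=1}^n T_jT_k.
\end{equation*}
Reading off coefficients, this is $Q = \sum_{j,k} a_{jk}T_jT_k$ with the symmetric matrix
$a_{jj} = \tfrac{n-1}{n^2}$ and $a_{jk} = -\tfrac1{n^2}$ for $j\neq k$. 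Note also that $Q = Q^*$ because $A$ is symmetric, so this is a bona fide selfadjoint quadratic form of the type appearing in Theorem~\ref{th2.1}.

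It then remains to verify \eqref{re.1}. Each row sum of $A$ equals $\tfrac{n-1}{n^2}+(n-1)\bigl(-\tfrac1{n^2}\bigr)=0$, so $A\mathbf b = \tfrac1n A\sideset{^t}{}{\mathop{(}}1,\dots,1) = \mathbf 0$. For the nondegeneracy condition,
\begin{equation*}
\sum_{j=1}^n b_j^m a_{jj} = n\cdot\frac1{n^m}\cdot\frac{n-1}{n^2} = \frac{n-1}{n^{m+1}},
\end{equation*}
which is nonzero for every $m\in\IN$ provided $n\geq 2$ (the case $n=1$ is vacuous since $V=0$). Thus both conditions in \eqref{re.1} hold, and applying Theorem~\ref{th2.1} to this particular pair $(A,\mathbf b)$ gives the equivalence: $\overline T$ and $V$ are free if and only if $T_1$ has semicircle distribution.

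The proof has no real obstacle beyond the algebraic bookkeeping; all the work is in Theorem~\ref{th2.1}, and the only substantive verification here is the row-sum identity $A\mathbf b = \mathbf 0$, which is exactly the expected orthogonality between the sample mean and the centered observations, and the fact that the diagonal of $A$ is nontrivial.
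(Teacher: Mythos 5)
Your proposal is correct and is exactly the intended derivation: the paper states Corollary~\ref{cor2.1} as an immediate specialization of Theorem~\ref{th2.1} without a separate proof, and your choice $b_j=\tfrac1n$, $a_{jj}=\tfrac{n-1}{n^2}$, $a_{jk}=-\tfrac1{n^2}$ ($j\ne k$) together with the verification of \eqref{re.1} (including the observation that $n\ge 2$ is needed) is precisely the computation that is being suppressed.
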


Theorem~\ref{th2.1} and Corollary~\ref{cor2.1} for bounded free 
identically distributed random variables under the assumptions that
$A$ is non-negative definite and $\mathbf b$ is non-negative 
was proved by Hiwatashi, Nagisa and Yoshida~\cite{HiNaYo:1999}. 
A more general version of Theorem~\ref{th2.1} for bounded free identically distributed 
random variables was proved by the last author in~\cite{Le:2003}. 
Therefore we only need to prove the
``only if'' part of Theorem~\ref{th2.1}. In order to do this  
we establish that the~freeness of $L$ and $Q$ 
implies that the distribution of $T_1$ has moments of all order, i.e.,
$\tau(\abs{T_1}^k)<\infty,\,k\in\IN$, where $\abs{T}=(T^*T)^{1/2}$.
Namely, we prove the following result.
\begin{theorem}\label{th2.2}
Let $T_1,T_2,\dots,T_n$ be free identically distributed random variables 
in $W^*$-probability space $(\mathcal A,\tau)$
such that $\tau(T_j^2)<\infty$. 
We consider the~linear form $L=\sum_{j=1}^nb_jT_j$ and the~quadratic 
form $Q=\sum_{j,k}^na_{jk}T_jT_k$ with real coefficients
$b_j$ and $a_{jk}$ 
such that
\begin{equation}\label{5.1} 
b_ja_{jj}\ne 0 \quad \text{for some}\quad j\in\{1,2,\dots, n\}.
\end{equation}
If the~forms $L$ and $Q$ are free, then $\tau(\abs{T_1}^k)<\infty,\,k=1,2,\dots$.
\end{theorem}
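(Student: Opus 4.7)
\emph{Proof plan.} We proceed by induction on $k$, bootstrapping from the given $\tau(T_1^2)<\infty$ to $\tau(\abs{T_1}^k)<\infty$ for every $k\in\IN$. Write $\kappa_k$ for the $k$-th free cumulant of $T_1$, which is well-defined once $\tau(\abs{T_1}^k)<\infty$, and assume inductively that $\tau(\abs{T_1}^{2m})<\infty$ for some $m\ge 1$.

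The first step is to promote the freeness of $L$ and $Q$ from its bounded-function definition to a usable identity involving unbounded elements. Introducing the spectral truncations $L_R=L\cdot\mathbf{1}_{\{|L|\le R\}}(L)$ and $Q_S=Q\cdot\mathbf{1}_{\{|Q|\le S\}}(Q)$, bounded freeness gives $\tau(L_R^{2m}\,Q_S)=\tau(L_R^{2m})\,\tau(Q_S)$. The inductive hypothesis implies $L\in L^{2m}(\mathcal A,\tau)$, so $L^{2m}\in L^1$; and H\"older's inequality applied to $Q=\sum_{j,k}a_{jk}T_jT_k$ (using $T_j\in L^{2m}$) gives $Q\in L^m\subset L^1$. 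Letting first $R\to\infty$ (using $\|L^{2m}-L_R^{2m}\|_1\to 0$ and boundedness of $Q_S$) and then $S\to\infty$ yields the regularised identity
\[
\tau(L^{2m}\,Q)=\tau(L^{2m})\,\tau(Q),
\]
where $\tau(L^{2m}Q):=\lim_{S\to\infty}\tau(L^{2m}Q_S)$.

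Next I would expand both sides combinatorially via the moment--cumulant formula for the free identically distributed family $(T_j)$, namely $\tau(T_{i_1}\cdots T_{i_r}) = \sum_{\pi\in\mathrm{NC}(r),\,\pi\le\ker(\mathbf{i})}\,\prod_{V\in\pi}\kappa_{|V|}$. Applied to $\tau(L^{2m}Q)=\sum_{\mathbf{i},j,k}b_{i_1}\cdots b_{i_{2m}}a_{jk}\,\tau(T_{i_1}\cdots T_{i_{2m}}T_jT_k)$, the cumulant $\kappa_{2m+2}$ can arise only from the one-block partition of $\{1,\dots,2m+2\}$, which forces all indices in the corresponding monomial to coincide. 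Hence the coefficient of $\kappa_{2m+2}$ on the left-hand side is $\sum_{s=1}^n b_s^{2m}a_{ss}$, while the right-hand side $\tau(L^{2m})\tau(Q)$ is a polynomial in $\kappa_1,\dots,\kappa_{2m}$ and contains no $\kappa_{2m+2}$ contribution. Whenever $\sum_s b_s^{2m}a_{ss}\ne 0$, solving for $\kappa_{2m+2}$ expresses it as a finite polynomial in the previously known cumulants, and moment--cumulant inversion delivers $\tau(T_1^{2m+2})<\infty$, completing the inductive step.

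The principal obstacle is the degenerate case $\sum_s b_s^{2m}a_{ss}=0$, which is not excluded by the weak hypothesis $b_{j_0}a_{j_0j_0}\ne 0$ alone -- for example $n=2$, $b_1=b_2=1$, $a_{11}=-a_{22}\ne 0$ makes this sum vanish for every $m$ while $b_1a_{11}\ne 0$. To cover this case the plan is to enlarge the pool of testing identities: freeness supplies $\tau(f(L)\,Q)=\tau(f(L))\tau(Q)$ for \emph{every} bounded Borel $f$, and by choosing $f$ to isolate a suitable spectral component of $L$ (equivalently, by exploiting alternating mixed-freeness relations $\kappa_n(L,Q,\dots)=0$ together with the Krawczyk--Speicher formula for free cumulants of products) one obtains a substitute relation whose coefficient of $\kappa_{2m+2}$ is a nonzero multiple of $b_{j_0}a_{j_0j_0}$. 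Rigorously justifying the termwise moment--cumulant expansion of the regularised $\tau(L^{2m}Q)$ under only an $L^{2m}$ hypothesis on $T_1$ -- essentially a contradiction argument, since a hypothetical $\tau(T_1^{2m+2})=\infty$ would produce a $\pm\infty$ divergence in the algebraic sum that cannot be matched by the finite right-hand side -- is the technical heart of the proof.
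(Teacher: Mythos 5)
Your route is genuinely different from the paper's, and it has gaps that the paper's argument is specifically designed to avoid. The paper never uses the factorization $\tau(f(L)g(Q))=\tau(f(L))\tau(g(Q))$ at all. Freeness of $L$ and $Q$ enters only through the fact that the distribution of $\abs{QL^{d-1}}^2$ is the free multiplicative convolution $\mu_{Q^2}\boxtimes\mu_{L^{2(d-1)}}$; fractional-moment estimates for $\boxtimes$ (Lemma~\ref{l2.2}, built on the Krein-function characterization of moment existence in Propositions~\ref{pr2.1} and \ref{pr2.2}, via the subordination functions of Theorem~\ref{th2.3}) then give the \emph{absolute} bound $\tau(\abs{QL^{d-1}})<\infty$ directly. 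The monomial expansion \eqref{5.7*} is handled with Minkowski's inequality on absolute values, so no sign cancellation can occur, and the conclusion $\tau\bigl(\bigabs{\sum_j a_{jj}b_j^{d-1}T_j^{d+1}}\bigr)<\infty$ is converted into $\tau(\abs{T_1}^{d+1})<\infty$ by Lemma~\ref{l2.3}: if a sum of \emph{free} variables is in $L^1$, each summand is. That last step is exactly what lets the hypothesis \eqref{5.1} be as weak as ``$b_ja_{jj}\ne 0$ for \emph{some} $j$'': the operator-valued sum is kept intact and one nonzero diagonal coefficient suffices, whereas your trace identity collapses the diagonal contribution to the scalar $\bigl(\sum_s b_s^{2m}a_{ss}\bigr)\tau(T_1^{2m+2})$ and genuinely needs that scalar to be nonzero.

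Concretely, three things in your plan do not go through. First, as you yourself observe, $\sum_s b_s^{2m}a_{ss}$ can vanish for every $m$ under hypothesis \eqref{5.1} (your own example $b_1=b_2=1$, $a_{11}=-a_{22}$), and the proposed repair --- choosing other bounded test functions $f$ to ``isolate a spectral component'' --- is not an argument: any identity of the form $\tau(f(L)Q)=\tau(f(L))\tau(Q)$ still sees $Q$ only through $\tau(Q)$, and there is no indication how a relation with coefficient proportional to $b_{j_0}a_{j_0j_0}$ alone would emerge. Second, the termwise moment--cumulant expansion of $\tau(L^{2m}Q_S)$ is formal at the inductive stage: several monomials (not just the diagonal ones) may fail to be integrable, the coefficients $a_{jk}b_{i_1}\cdots b_{i_{2m}}$ and the mixed moments carry arbitrary signs, and the truncation $Q_S$ is a spectral truncation of the \emph{sum} $Q$, not of the individual $T_j$, so the hypothetical divergences are not localized in single terms and a cancellation of infinities cannot be excluded by the handwave ``cannot be matched by the finite right-hand side.'' Third, the final step is circular as stated: $\kappa_{2m+2}$ is only defined once $\tau(T_1^{2m+2})<\infty$ is known, so ``solving for $\kappa_{2m+2}$'' cannot by itself deliver finiteness of the moment; one needs an analytic characterization of moment existence in terms of an expansion of a transform (this is precisely the role of Proposition~\ref{pr2.1} in the paper, and of Lemma~\ref{l2.3} on the additive side). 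To make your approach rigorous you would essentially have to rebuild these analytic ingredients, at which point you are back to the paper's proof.
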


In particular, we infer from this result that under very weak assumptions
freeness of linear and quadratic forms in noncommutative random 
variables from a~tracial $W^*$-probability space
automatically implies finiteness of all moments.


\section{Auxiliary results. Measurable operators and 
integral for a trace}

We fix a~faithful finite normal trace $\tau$ on a~finite von Neumann algebra
$\mathcal A$. By $\tilde{\mathcal A}$ we denote the~completion of $\mathcal A$
with respect to $\tau$-measure topology. We denote 
$\tilde{\mathcal A}_+=\{a^*a:a\in\tilde{\mathcal A}\}$ as well.
The~function $\tau$ on $\tilde{\mathcal A}_+$ enjoys 
the~following properties (see~\cite{Ta:2003}, p. 176):
\begin{align}
\tau(a+b)=\tau(a)&+\tau(b),\quad a,b\in\tilde{\mathcal A}_+,\quad
\tau(\lambda a)=\lambda\,\tau(a),\quad \lambda\ge 0;\notag\\
&\tau(x^*x)=\tau(xx^*),\quad x\in\tilde{\mathcal A}.\notag
\end{align}

For $1\le p<\infty$, set
\begin{equation}\notag
\norm{x}_p=\tau(\abs{x}^p)^{1/p},\quad x\in \tilde{\mathcal A};\quad
L^p(\mathcal A,\tau)=\{x\in \tilde{\mathcal A}:\, \norm{x}_p<\infty\}.
\end{equation}
Then $L^p(\mathcal A,\tau)$ is a~Banach space in which 
$\mathcal A\cap L^p(\mathcal A,\tau)$
is dense. Furthermore, $L^p(\mathcal A,\tau)$ is a two-sided operator ideal
and 
\begin{equation}\label{au1.1}
\norm{ax}_p\le\norm{a}\norm{x}_p,\quad \norm{xa}_p\le\norm{a}\norm{x}_p
\end{equation}
for each $a\in\mathcal A,\,x\in L^p(\mathcal A,\tau)$.

If $1/p_1+\dots+1/p_n=1$ and $p_j>1,
j=1,\dots,n$, then the~product of $L^{p_1}(\mathcal A,\tau),\dots$, 
$L^{p_n}(\mathcal A,\tau)$ 
coincides with $L^1(\mathcal A,\tau)$ and we have the~H\"older inequality:
\begin{equation}\label{au1.2} 
\abs{\tau(x_1x_2\dotsm x_n)}\le \norm{x_1}_{p_1}\norm{x_2}_{p_2}\dotsm \norm{x_n}_{p_n},
\quad x_1\in L^{p_1}(\mathcal A,\tau),\dots,
\quad x_n\in L^{p_n}(\mathcal A,\tau).
\end{equation}

Since $x_1x_2\dotsm x_n$ admits a~representation $x_1x_2\dotsm x_n=u\abs{x_1x_2\dotsm x_n}$, 
where $u\in \mathcal A$ is a~partial isometry, we have, using
(\ref{au1.1}) and (\ref{au1.2}),
\begin{align}\label{au1.3} 
\norm{x_1x_2\dotsm x_n}_1&=\tau(\abs{x_1x_2\dotsm x_n})=\tau(u^*x_1x_2\dotsm x_n)
\le\norm{u^*x_1}_{p_1} \norm{x_2}_{p_2}\dotsm \norm{x_n}_{p_n}\notag\\
&\le \norm{u^*}_{\infty}\,\norm{x_1}_{p_1}\norm{x_2}_{p_2}\dotsm\norm{x_n}_{p_n}=
\norm{x_1}_{p_1}\norm{x_2}_{p_2}\dotsm\norm{x_n}_{p_n}. 
\end{align}

In later reference we state the noncommutative \emph{Minkowski inequality}
\begin{equation}\label{au1.4}
\norm{x_1+\dots+x_n}_p\le \norm{x_1}_p+\dots+\norm{x_n}_p
\end{equation}
for $1\le p<\infty$.

\section{Auxiliary analytic results}

Denote by $\bf M$ the~family of all Borel probability measures
on the~real line $\mathbb R$.

Let $T_1$ and $T_2$ are free random variables with distributions
$\mu_1$ and $\mu_2$ from $\bf M$, respectively. Following Bercovici and
Voiculescu \cite{BeVo:1993} we define the additive free convolution $\mu_1\boxplus\mu_2$ as 
the distribution of $T_1+T_2$. 

Let $\bf M_+$ be the~set of probability measures  $\mu$ on 
$\mathbb R_+=[0,+\infty)$ such that $\mu(\{0\})<1$.

Fix probability measures $\mu_1,\mu_2\in\bf M_+$ and
fix random variables $T_j$ such that their disributions
$\mu_{T_j}=\mu_j$.
Following~\cite{BeVo:1993} we set $\mu_1\boxtimes\mu_2=
\mu_{T_1^{1/2}T_2T_1^{1/2}}=\mu_{T_2^{1/2}T_1T_2^{1/2}}$.

Define, following 
Voiculescu~\cite{Vo:1987}, the~$\psi_{\mu}$-function of 
a~probability measure $\mu\in\bf M_+$, by
\begin{equation}\label{2.4}
\psi_{\mu}(z)=\int\limits_{\mathbb R_+}\frac{z\xi}{1-z\xi}\,\mu(d\xi)
\end{equation}
for $z\in\mathbb C\setminus\mathbb R_+$. The~measure $\mu$ is completely
determined by $\psi_{\mu}$. 
Note that $\psi_{\mu}:\mathbb C\setminus\mathbb R_+\to \mathbb C$ is 
an~analytic function such that $\psi_{\mu}(\bar z)
=\overline{\psi_{\mu}(z)}$, and $z(\psi_{\mu}(z)+1)\in\mathbb C^+$
for $z\in\mathbb C^+$. Consider the~function 
\begin{equation}\label{2.5}
K_{\mu}(z):=\psi_{\mu}(z)/(1+\psi_{\mu}(z)),
\quad z\in\mathbb C\setminus\mathbb R_+.
\end{equation}  
It is easy to see that $K_{\mu}(z)\in\mathcal K$, where 
$\mathcal K$ is the~subclass of $\mathcal N$ of functions $f$ such that 
$f(z)$ is analytic and nonpositive on the~negative real axis, and
$f(-x)\to 0$ as $x\downarrow 0$. 

This subclass of $\mathcal N$ was described by M. Krein~\cite{Kr:1977},
therefore we denote it by $\mathcal K$.


\begin{theorem}\label{th2.3}
There exist two uniquely determined functions $Z_1(z)$ and $Z_2(z)$ 
in the~Krein class $\mathcal K$ such that 
\begin{equation}\label{2.8}
Z_1(z)\,Z_2(z)=zK_{\mu_1}(Z_1(z))\quad\text{and}\quad 
K_{\mu_1}(Z_1(z))=K_{\mu_2}(Z_2(z)),
\quad z\in\mathbb C^+. 
\end{equation}
Moreover $K_{\mu_1\boxtimes\mu_2}=K_{\mu_1}(Z_1(z))$.
\end{theorem}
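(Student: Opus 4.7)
The plan is to derive the claimed subordination from the results of Bercovici--Voiculescu~\cite{BeVo:1993} on multiplicative free convolution of measures in $\mathbf{M}_+$, and then to extend the resulting functions from a neighborhood of $0$ on the negative real axis to all of $\mathbb{C}^+$ by a Denjoy--Wolff fixed-point argument.

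First I would record the basic analytic properties of $K_\mu$. For $\mu\in\mathbf{M}_+$, a routine calculation from (\ref{2.4}) shows that $\psi_\mu$ is real-analytic and strictly increasing on $(-\infty,0)$, with $\psi_\mu(-\infty) = -\mu((0,\infty))\ge -1$ and $\psi_\mu(0^-) = 0$. Hence (\ref{2.5}) yields that $K_\mu$ is strictly increasing on $(-\infty,0)$ and vanishes at $0$, so a local analytic inverse $K_\mu^{-1}$ exists near $0$. I would then introduce $\Sigma_\mu(\kappa):=K_\mu^{-1}(\kappa)/\kappa$, the multiplicative analogue of the $R$-transform, and invoke the Bercovici--Voiculescu multiplicativity $\Sigma_{\mu_1\boxtimes\mu_2}(\kappa) = \Sigma_{\mu_1}(\kappa)\Sigma_{\mu_2}(\kappa)$.

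Next, for $z$ in a small neighborhood of $0$ on the negative real axis, I would \emph{define}
\[
Z_j(z):=K_{\mu_j}^{-1}\bigl(K_{\mu_1\boxtimes\mu_2}(z)\bigr),\qquad j=1,2,
\]
and verify (\ref{2.8}) by direct computation. Writing $\kappa=K_{\mu_1\boxtimes\mu_2}(z)$, the identity $K_{\mu_1}(Z_1)=K_{\mu_2}(Z_2)=K_{\mu_1\boxtimes\mu_2}(z)$ is immediate, while the multiplicativity of $\Sigma$ gives
\[
Z_1(z)Z_2(z) = \kappa^2\Sigma_{\mu_1}(\kappa)\Sigma_{\mu_2}(\kappa) = \kappa^2\Sigma_{\mu_1\boxtimes\mu_2}(\kappa) = \kappa\, K_{\mu_1\boxtimes\mu_2}^{-1}(\kappa) = z\kappa = zK_{\mu_1}(Z_1(z)).
\]
Locally $Z_j$ is then analytic, nonpositive on the negative real axis, and $Z_j(-x)\to 0$ as $x\downarrow 0$, so $Z_j$ belongs to $\mathcal{K}$ on its domain of definition.

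The main task is then to extend $Z_1,Z_2$ analytically to all of $\mathbb{C}^+$. My approach is to recast (\ref{2.8}) as a fixed-point equation: eliminating $Z_2 = zK_{\mu_1}(Z_1)/Z_1$ and substituting into $K_{\mu_1}(Z_1)=K_{\mu_2}(Z_2)$ yields, for each fixed $z\in\mathbb{C}^+$, an equation $Z_1 = T_z(Z_1)$. The hard part will be to show that $T_z$ is a holomorphic self-map of a suitable hyperbolic subdomain of $\mathbb{C}^+$ and then to apply the Denjoy--Wolff theorem to obtain an attracting fixed point $Z_1(z)$ that depends analytically on $z$ and agrees with the local definition near $0$; this is the multiplicative counterpart of the Belinschi--Bercovici subordination argument in the additive setting, and constitutes the chief technical obstacle. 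Once the extension exists, reflection symmetry yields analyticity on $\mathbb{C}\setminus\mathbb{R}_+$ and the $\mathcal{K}$-properties follow from the local behavior near $0$. Uniqueness of $Z_1,Z_2\in\mathcal{K}$ satisfying (\ref{2.8}) is then immediate by analytic continuation from the local uniqueness of the fixed point.
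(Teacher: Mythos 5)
First, a point of reference: the paper does not prove Theorem~\ref{th2.3} at all. It is quoted from Biane~\cite{Bi:1998}, with purely analytic proofs attributed to Belinschi--Bercovici~\cite{BelBe:2007} and Chistyakov--G\"otze~\cite{ChG:2005}. So your outline is being measured against those external proofs rather than against anything carried out in this text.

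Your local analysis is sound: $K_\mu$ is indeed strictly increasing on $(-\infty,0)$ with $K_\mu(0^-)=0$, your $\Sigma_\mu(\kappa)=K_\mu^{-1}(\kappa)/\kappa$ is exactly the Bercovici--Voiculescu $S$-transform in the variable $\kappa=u/(1+u)$, and the computation deriving \eqref{2.8} near $0$ from $\Sigma_{\mu_1\boxtimes\mu_2}=\Sigma_{\mu_1}\Sigma_{\mu_2}$ is correct. The genuine gap is the step you yourself label ``the chief technical obstacle'': showing that the fixed-point map $T_z(w)=zK_{\mu_2}\bigl(zK_{\mu_1}(w)/w\bigr)\big/\bigl(zK_{\mu_1}(w)/w\bigr)$ is a holomorphic self-map of a suitable hyperbolic domain for every $z\in\mathbb{C}^+$, that its Denjoy--Wolff point lies in the interior rather than on the boundary (in which case no fixed point in the domain would exist), that the degenerate case where $T_z$ is a conformal automorphism is excluded or treated separately, and that the resulting fixed point depends analytically on $z$ and lands in $\mathcal{K}$. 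This is not a routine verification: it is essentially the entire content of \cite{BelBe:2007} and \cite{ChG:2005} in the multiplicative setting, and it requires a careful study of where $w\mapsto K_\mu(w)/w$ maps $\mathbb{C}^+$ and $\mathbb{C}\setminus\mathbb{R}_+$ (this is precisely where the hypothesis $\mu(\{0\})<1$ in the definition of $\mathbf{M}_+$ enters). Without it you have established the theorem only for $z$ in a real neighbourhood of $0$, where it reduces to the known $S$-transform identity. A smaller issue: uniqueness is not ``immediate by analytic continuation from the local uniqueness of the fixed point'' until you show that \emph{any} pair $Z_1,Z_2\in\mathcal{K}$ satisfying \eqref{2.8} must agree with your constructed pair near $0$; local uniqueness of solutions of the system \eqref{2.8} within $\mathcal{K}$ (as opposed to uniqueness of the attracting fixed point of $T_z$ for a fixed $z$) itself needs the Denjoy--Wolff attraction argument, so it cannot be cited as a separate, easier fact.
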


This result was proved by Biane~\cite{Bi:1998}.  
Belinschi and Bercovici~\cite{BelBe:2007} and Chistyakov
and G\"otze~\cite{ChG:2005} proved this theorem by purely
analytic methods.

For a probability measure $\mu\in\bf M$, define its \emph{absolute moment} of order $\alpha$
\begin{equation}\notag
\rho_{\alpha}(\mu):=\int\limits_{\mathbb R}\abs{x}^{\alpha}\,\mu(dx)
\end{equation}
and for $\mu\in\bf M_+$, define 
\begin{equation}\notag
m_{\alpha}(\mu):=\int\limits_{\mathbb R_+}x^{\alpha}\,\mu(dx),
\end{equation}
where $\alpha\ge 0$.

We now characterize existence of moments in terms of Taylor expansions
of the Krein function.
A similar result for the $R$-transform was obtained by
Benaych-Georges~\cite{Benaych:2006:Taylor} and applied
to additive free infinite divisibility.
\begin{proposition}\label{pr2.1}
Let $\mu\in\bf M_+$. In order that $m_p(\mu)<\infty$ for some $p\in\mathbb N$
it is necessary and sufficient that 
the Krein function $(\ref{2.5})$ admits the expansion 
\begin{equation}\label{pr1a}
\frac 1x K_{\mu}(-x)=-r_1(\mu)+r_2(\mu)x+\dots+(-1)^pr_p(\mu)x^{p-1}+o(x^{p-1})
\quad\text{for}\quad x>0
\quad\text{and}\quad x\downarrow 0,
\end{equation}
with some real coefficients $r_1(\mu),r_2(\mu),\dots,r_p(\mu)$.
\end{proposition}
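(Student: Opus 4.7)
The plan is to reduce everything to an analogous asymptotic expansion of $\psi_\mu(-x)/x$, and then read off the moments from the integral representation (\ref{2.4}) via an induction argument.

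\medskip
\noindent\textbf{Step 1: passage between the expansions of $K_\mu$ and $\psi_\mu$.} From (\ref{2.5}) one has $\psi_\mu = K_\mu/(1-K_\mu)$, and for $\mu\in\mathbf M_+$ we have $\psi_\mu(-x)\to 0$ as $x\downarrow 0$ (by dominated convergence in (\ref{2.4})), hence also $K_\mu(-x)\to 0$. Because both $t\mapsto t/(1+t)$ and $t\mapsto t/(1-t)$ are analytic near $t=0$, an asymptotic expansion
\[
\frac{K_\mu(-x)}{x}=-r_1+r_2x+\dots+(-1)^pr_px^{p-1}+o(x^{p-1})
\]
is equivalent to a corresponding expansion
\[
\frac{\psi_\mu(-x)}{x}=-s_1+s_2x+\dots+(-1)^ps_px^{p-1}+o(x^{p-1}),
\]
with $s_1,\dots,s_p$ polynomial combinations of $r_1,\dots,r_p$ and vice versa. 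Thus it suffices to prove the proposition with $K_\mu$ replaced by $\psi_\mu$.

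\medskip
\noindent\textbf{Step 2: sufficiency (moments $\Rightarrow$ expansion).} Assuming $m_p(\mu)<\infty$, write
\[
-\frac{\psi_\mu(-x)}{x}=\int_{\mathbb R_+}\frac{\xi}{1+x\xi}\,\mu(d\xi)
\]
and insert the identity $\frac{1}{1+x\xi}=\sum_{k=0}^{p-2}(-x\xi)^k+\frac{(-x\xi)^{p-1}}{1+x\xi}$. Each integral $\int\xi^{k+1}\mu(d\xi)=m_{k+1}(\mu)$ for $k\le p-2$ is finite, and the last integrand is dominated by $\xi^p\in L^1(\mu)$, so dominated convergence yields $\int\xi^p/(1+x\xi)\,\mu(d\xi)\to m_p(\mu)$ and, by subtracting the limit, the corresponding contribution is $(-x)^{p-1}m_p(\mu)+o(x^{p-1})$. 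This produces the desired expansion.

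\medskip
\noindent\textbf{Step 3: necessity (expansion $\Rightarrow$ moments), by induction on $p$.} Define for $k\ge 0$
\[
F_k(x):=\int_{\mathbb R_+}\frac{\xi^{k+1}}{1+x\xi}\,\mu(d\xi)\in[0,+\infty],\qquad x>0,
\]
so that $-\psi_\mu(-x)/x=F_0(x)$. By monotone convergence, $F_k(x)\uparrow m_{k+1}(\mu)$ as $x\downarrow 0$. The base case is immediate: if $F_0(x)$ has a finite limit $s_1$ at $0$, then $m_1(\mu)=s_1<\infty$. For the inductive step, assume $m_1(\mu),\dots,m_{q}(\mu)$ are finite, where $q<p$. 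The identity $\frac{1}{1+x\xi}=1-\frac{x\xi}{1+x\xi}$ gives the recursion
\[
F_k(x)=m_{k+1}(\mu)-xF_{k+1}(x),\qquad 0\le k\le q-1,
\]
which, solved iteratively, expresses
\[
F_q(x)=(-x)^{-q}\Bigl(F_0(x)-m_1(\mu)+m_2(\mu)x-\dots+(-1)^{q-1}m_{q}(\mu)x^{q-1}\Bigr).
\]
The hypothesized expansion of $F_0(x)$ to order $q$ shows that the bracket equals $(-1)^q s_{q+1}x^q+o(x^q)$ (note that the finite lower-order coefficients $s_k$ must coincide with $m_k(\mu)$ by uniqueness of asymptotic expansions and Step~1 in the forward direction already established). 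Hence $F_q(x)\to s_{q+1}$ as $x\downarrow 0$, and monotone convergence forces $m_{q+1}(\mu)=s_{q+1}<\infty$. Iterating up to $q+1=p$ completes the argument.

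\medskip
\noindent\textbf{Main obstacle.} The only non-routine point is the reverse direction: one must verify that the recursion $F_k=m_{k+1}(\mu)-xF_{k+1}$ transports the hypothesis on $F_0$ cleanly into a statement about $F_q$, and that the finite coefficients occurring along the way must coincide with the moments already shown finite. Once this bookkeeping is in place, the argument is a clean induction that uses nothing beyond monotone/dominated convergence applied to (\ref{2.4}).
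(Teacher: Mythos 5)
Your argument is correct and follows essentially the same route as the paper: both directions are reduced to the integral representation of $\psi_\mu$, the forward implication by expanding $1/(1+x\xi)$ geometrically with a dominated-convergence remainder, and the converse by an induction that exploits positivity (your monotone-convergence statement $F_q(x)\uparrow m_{q+1}(\mu)$ plays exactly the role of the paper's truncated-moment lower bound), with the $K_\mu\leftrightarrow\psi_\mu$ passage handled by composing asymptotic expansions with $t\mapsto t/(1\pm t)$. The only blemish is a harmless sign slip in the last displayed term of Step~3, where the coefficient of $m_q(\mu)x^{q-1}$ should be $(-1)^q$ rather than $(-1)^{q-1}$.
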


The~coefficients $r_1(\mu),r_2(\mu),\dots$ coincide with the so-called boolean cumulants,
see Speicher and Woroudi~\cite{SpWo:1997}.
Note that $r_k(\mu)$ depends on $m_1(\mu),m_2(\mu),\dots,m_k(\mu)$ only. 
\begin{proof}
{\it Necessity}. Assume that $m_p(\mu)<\infty$. Then we see that, for $x>0$,
\begin{align}
\psi_{\mu}(-x)+1&=
\frac 1x\int\limits_{\mathbb R_+}\frac{\mu(du)}
{\frac 1x+u}\notag\\
&=\frac 1x\Big(x-m_1(\mu)x^2+\dots+(-1)^p m_p(\mu)x^{p+1}
+(-1)^{p+1}x^{p+1}\int\limits_{\mathbb R_+}\frac{u^{p+1}\mu(du)}
{\frac 1x+u}\Big),\label{pr1}
\end{align}
where
\begin{equation}
\int\limits_{\mathbb R_+}\frac{u^{p+1}\,\mu(du)}{\frac 1x+u}\to 0
\quad\text{as}\quad x\to 0.\label{pr1*}
\end{equation}

By (\ref{pr1}), we have the~relation, for the~same $x$,
\begin{align}
K_{\mu}(-x)&=\frac{\psi_{\mu}(-x)}{\psi_{\mu}(-x)+1}=\psi_{\mu}(-x)
-\psi_{\mu}^2(-x)+\dots+(-1)^{p-1}\psi_{\mu}^p(-x)+O(x^{p+1})\notag\\
&=-r_1(\mu)x+r_2(\mu)x^2+\dots+(-1)^pr_p(\mu)x^{p}+
(-1)^{p+1}x^{p}\int\limits_{\mathbb R_+}\frac{u^{p+1}\mu(du)}
{\frac 1x+u}+O(x^{p+1}).\label{pr2}
\end{align}
Now \eqref{pr1*} and \eqref{pr2} imply the~necessity of the~assumptions 
of Proposition~\ref{pr2.1}.

{\it Sufficiency}. Note that, for positive sufficiently small $0<x\le x_0$, 
\begin{equation}\notag
-\frac 1x K_{\mu}(-x)=\frac 1{\psi_{\mu}(-x)+1}\int\limits_{\mathbb R_+}
\frac {u\,\mu(du)}{1+ux}
\ge \frac 1{\psi_{\mu}(-x)+1}\int\limits_{[0,1/x)}\frac u2\,\mu(du)\ge \frac 12
\int\limits_{[0,1/x)}u\,\mu(du).
\end{equation}
By (\ref{pr1a}), we conclude that $m_1(\mu)<\infty$. 
Assume that the~inequality $m_k(\mu)<\infty$ holds for any
$k\le p-1$. From (\ref{pr1}) we obtain the~formula
\begin{equation}\notag
\psi_{\mu}(-x)=-m_1(\mu)x+\dots+(-1)^{k} m_k(\mu)x^{k}
+(-1)^{k+1}x^{k}\int\limits_{\mathbb R_+}\frac{u^{k+1}\,\mu(du)}
{\frac 1x+u},\quad x>0.
\end{equation}
Using this formula and (\ref{pr2}) with $p=k$ we note that, for small $x>0$,
\begin{align}
(-1)^{k+1}\Big(K_{\mu}(-x)+r_1(\mu)x-r_2(\mu)x^2&-\dots
-(-1)^kr_k(\mu)x^{k}\Big)=x^{k}\int\limits_{\mathbb R_+}\frac{u^{k+1}\,
\mu(du)}{\frac 1x+u}+O(x^{k+1})
\notag\\
&\ge \frac 12x^{k+1}\int\limits_{[0,1/x)}u^{k+1}\,\mu(du)+O(x^{k+1}).\label{pr3}
\end{align}
On the other hand, by (\ref{pr1a}) with $p=k+1$, we have, for small $x>0$, 
\begin{equation}\notag
K_{\mu}(-x)+r_1(\mu)x-r_2(\mu)x^2-\dots-(-1)^kr_k(\mu)x^{k}
=(-1)^{k+1}r_{k+1}(\mu)x^{k+1}+o(x^{k+1}). 
\end{equation}
Therefore we easily conclude from (\ref{pr3}) that $m_{k+1}(\mu)<\infty$. 
Thus induction may be used and the~sufficiency of the~assumptions 
of Proposition~\ref{pr2.1} is also proved.
\end{proof}

Speicher and Woroudi~\cite{SpWo:1997} indicated a~universal formula for calculation of boolean cumulants
$r_k(\mu)$. For example
\begin{align}
&r_1(\mu)=m_1(\mu),\,
r_2(\mu)=m_2(\mu)-m_1^2(\mu),\,r_3(\mu)=m_3(\mu)-2m_1(\mu)m_2(\mu)
+m_1^3(\mu),\notag\\
&r_4(\mu)=m_4(\mu)-m_2^2(\mu)-2m_1(\mu)m_3(\mu)
+3m_1(\mu)^2m_2(\mu)-m_1^4(\mu).\label{pr3*}
\end{align}


\begin{proposition}\label{pr2.2}
Let $\mu\in\bf M_+$ and $\alpha\in(0,1)$. Then
\begin{equation}\label{pr2,1}
\frac 12\Big(m_{\alpha}(\mu)-\int\limits_{(0,1)}u^{\alpha}\,
\mu(du)\Big)\le -(1-\alpha)\int
\limits_{(0,1]}\frac{K_{\mu}(-x)\,dx}{x^{1+\alpha}}\le
c(\mu)\alpha^{-1}m_{\alpha}(\mu),
\end{equation}  
where $c(\mu):=1/\int_{\mathbb R_+}\frac{\mu(du)}{1+u}$.

Moreover, $m_{\alpha}(\mu)<\infty$ with $\alpha\in(0,1)$
if and only if
\begin{equation}\label{pr2,1*}
-\int\limits_{(0,1]}\frac{ K_{\mu}(-x)\,dx}{x^{1+\alpha}}<\infty.
\end{equation}
\end{proposition}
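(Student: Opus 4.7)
The plan is to reduce everything to elementary estimates by first making $-K_\mu(-x)$ explicit. Substituting $z=-x$ with $x>0$ into (\ref{2.4})–(\ref{2.5}) yields
\begin{equation*}
\psi_\mu(-x) = -\int_{\IR_+}\frac{xu}{1+xu}\,\mu(du), \qquad 1+\psi_\mu(-x) = \int_{\IR_+}\frac{\mu(du)}{1+xu},
\end{equation*}
so
\begin{equation*}
-K_\mu(-x) \;=\; \Bigl(\int_{\IR_+}\frac{xu}{1+xu}\,\mu(du)\Bigr)\Big/\Bigl(\int_{\IR_+}\frac{\mu(du)}{1+xu}\Bigr).
\end{equation*}
For $x\in(0,1]$ the denominator is monotone in $x$ and hence trapped between $\int\frac{\mu(du)}{1+u}=1/c(\mu)$ and $1$, which gives the two-sided pointwise estimate
\begin{equation*}
\int_{\IR_+}\frac{xu}{1+xu}\,\mu(du) \;\le\; -K_\mu(-x) \;\le\; c(\mu)\int_{\IR_+}\frac{xu}{1+xu}\,\mu(du).
\end{equation*}

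Dividing by $x^{1+\alpha}$, integrating over $(0,1]$, applying Tonelli (all integrands are nonnegative), and substituting $y=xu$ in the inner integral, I obtain
\begin{equation*}
J(\mu)\;\le\; -\int_{(0,1]}\frac{K_\mu(-x)\,dx}{x^{1+\alpha}}\;\le\; c(\mu)\,J(\mu), \qquad J(\mu):=\int_{\IR_+} u^\alpha\,G(u)\,\mu(du),
\end{equation*}
where $G(u):=\int_0^u \frac{dy}{y^\alpha(1+y)}$. The proof is then reduced to elementary estimates on $G$.

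For the right-hand inequality of (\ref{pr2,1}), I would split $G(u)$ at $y=1$: using $1+y\ge 1$ on $(0,1)$ and $1+y\ge y$ on $[1,\infty)$,
\begin{equation*}
G(u) \;\le\; \int_0^1 y^{-\alpha}\,dy + \int_1^\infty y^{-1-\alpha}\,dy \;=\; \frac{1}{1-\alpha}+\frac{1}{\alpha},
\end{equation*}
so $(1-\alpha)J(\mu)\le \bigl(1+\frac{1-\alpha}{\alpha}\bigr)m_\alpha(\mu)=\alpha^{-1}m_\alpha(\mu)$, hence $-(1-\alpha)\int_{(0,1]}K_\mu(-x)x^{-1-\alpha}dx\le c(\mu)\alpha^{-1}m_\alpha(\mu)$. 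For the left-hand inequality I would restrict the $u$-integration to $[1,\infty)$, where $G(u)\ge\int_0^1\frac{dy}{y^\alpha(1+y)}\ge \int_0^1\frac{dy}{2y^\alpha}=\frac{1}{2(1-\alpha)}$; then
\begin{equation*}
-(1-\alpha)\int_{(0,1]}\frac{K_\mu(-x)\,dx}{x^{1+\alpha}} \;\ge\; (1-\alpha)J(\mu) \;\ge\; \frac{1}{2}\int_{[1,\infty)} u^\alpha\,\mu(du) \;=\; \frac{1}{2}\Bigl(m_\alpha(\mu)-\int_{(0,1)}u^\alpha\,\mu(du)\Bigr).
\end{equation*}

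The equivalence $m_\alpha(\mu)<\infty\Leftrightarrow (\ref{pr2,1*})$ is then immediate from the two-sided bound: the upper estimate gives $(\Leftarrow)$, while the lower estimate controls the tail $\int_{[1,\infty)}u^\alpha\,\mu(du)$ and the truncated piece $\int_{(0,1)}u^\alpha\,\mu(du)\le 1$ is trivially finite, giving $(\Rightarrow)$. There is no genuine technical obstacle; the only non-routine ingredients are (i) recognising that the explicit quotient form of $-K_\mu(-x)$ combined with the uniform control $1/c(\mu)\le 1+\psi_\mu(-x)\le 1$ on $(0,1]$ lets one effectively ignore the denominator, and (ii) that the external $(1-\alpha)$ factor in the claim is precisely what cancels the integrable singularity of $G$ near $y=0$, producing the clean $\alpha^{-1}m_\alpha(\mu)$ on the right and the clean factor $\tfrac12$ on the left.
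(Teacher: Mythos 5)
Your argument is correct and follows essentially the same route as the paper: both squeeze $-K_\mu(-x)$ between $-\psi_\mu(-x)$ and $c(\mu)\,(-\psi_\mu(-x))$ via the bounds $1/c(\mu)\le 1+\psi_\mu(-x)\le 1$ on $(0,1]$ and then apply Tonelli, the only difference being that you carry out the exact substitution $y=xu$ and bound the universal function $G$, whereas the paper truncates $\frac{xu}{1+xu}$ at $u=1/x$ and invokes the tail formula $m_\alpha(\mu)=\alpha\int_{\mathbb R_+}x^{\alpha-1}\mu([x,\infty))\,dx$; the constants come out identical. The only slip is cosmetic: in your final paragraph the two implication labels are interchanged (the upper estimate shows that $m_\alpha(\mu)<\infty$ implies finiteness of the integral, and the lower estimate gives the converse).
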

\begin{proof}
In the first step we shall prove the right-hand side of \eqref{pr2,1}.
Without loss of generality we assume that $m_{\alpha}(\mu)<\infty$.
Since $1+\psi_{\mu}(-x)\ge \frac 1{c(\mu)}$ for $x\in(0,1]$,
we have
\begin{equation}\notag
 -K_{\mu}(-x)\le-c(\mu)\psi_{\mu}(-x)\le c(\mu)\Big(x\int\limits_{[0,1/x)}u\,\mu(du)
+\mu([1/x,\infty))\Big),\quad x\in(0,1].
\end{equation}
Taking into account that $m_{\alpha}(\mu)=\alpha\int\limits_{\mathbb R_+}
x^{\alpha-1}\mu([x,\infty))\,dx$, we finally obtain
\begin{equation}\label{pr2,1a}
  \begin{aligned}
-\frac 1{c(\mu)}\int\limits_{(0,1]}\frac{ K_{\mu}(-x)\,dx}{x^{1+\alpha}}&
\le\int\limits_{(0,1]}x^{-\alpha}
\int\limits_{[0,1/x)}u\,\mu(du)\,dx
+\int\limits_{(0,1]}x^{-1-\alpha}\mu([1/x,\infty))\,dx\\
&\le \int\limits_{[1,\infty)}u\int\limits_{(0,1/u]}x^{-\alpha}\,dx\,
\mu(du)+\frac 1{1-\alpha}\int\limits_{[0,1)}u\,
\mu(du)+\frac{m_{\alpha}(\mu)}{\alpha} \\
& \le
\frac{m_{\alpha}(\mu)}{\alpha(1-\alpha)}.
  \end{aligned}
\end{equation}
Let us prove the left-hand side of \eqref{pr2,1}, assuming without loss of
generality that $\int\limits_{(0,1]}\frac{K_{\mu}(-x)\,dx}{x^{1
+\alpha}}<\infty$.
Since, for $x>0$,
\begin{equation}\notag
 -K_{\mu}(-x)\ge-\psi_{\mu}(-x)\ge
\frac 12 x\int\limits_{[0,1/x)}u\,\mu(du),
\end{equation}
we have the lower bound 
\begin{equation}
\label{pr2,1b}
\begin{aligned}
-\int\limits_{(0,1]}\frac{K_{\mu}(-x)\,dx}{x^{1+\alpha}}&\ge \frac 12 \int
\limits_{(0,1]}x^{-\alpha}\int\limits_{[0,1/x)}u\,\mu(du)\,dx\ge \frac 12
\int\limits_{[1,\infty)}u\int\limits_{(0,1/u]} x^{-\alpha}\,dx
\,\mu(du)\\
&=\frac 1{2(1-\alpha)}\Big(m_{\alpha}(\mu)-\int
\limits_{(0,1)}u^{\alpha}\,\mu(du)\Big).
\end{aligned}
\end{equation}

The inequalities \eqref{pr2,1} follow from \eqref{pr2,1a} and \eqref{pr2,1b}.

Finally, statement \eqref{pr2,1*} is a direct consequence of \eqref{pr2,1}.

\end{proof}

\begin{lemma}\label{l2.1*}
Let $\mu_1$ and $\mu_2$ be probability measures from $\bf M_+$ such that  
$m_p(\mu_1)<\infty$ and $m_p(\mu_2)<\infty$ for some $p\in\mathbb N$. 
Then $m_p(\mu_1\boxtimes\mu_2)<\infty$. 
\end{lemma}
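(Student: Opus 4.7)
The plan is to use Proposition~\ref{pr2.1} as a characterization of $m_p<\infty$ via an $o(t^p)$ expansion of the Krein function on the negative axis, together with the subordination identities for $\boxtimes$ from Theorem~\ref{th2.3}. Writing $\nu:=\mu_1\boxtimes\mu_2$, Theorem~\ref{th2.3} gives $K_\nu(z)=K_{\mu_1}(Z_1(z))=K_{\mu_2}(Z_2(z))$ together with $Z_1(z)Z_2(z)=zK_{\mu_1}(Z_1(z))$ and $Z_1,Z_2\in\mathcal{K}$. My goal is to show that $K_\nu(-x)$ admits an order-$p$ expansion of the form~\eqref{pr1a} as $x\downarrow 0^+$ and then to apply Proposition~\ref{pr2.1} again in the reverse direction.

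First I would translate the hypothesis via Proposition~\ref{pr2.1}. Setting $\phi_i(t):=-K_{\mu_i}(-t)$, the finiteness of $m_p(\mu_i)$ yields
$$
\phi_i(t)=m_1(\mu_i)\,t-r_2(\mu_i)\,t^2+\dots+(-1)^{p+1}r_p(\mu_i)\,t^p+o(t^p),\qquad t\downarrow 0^+.
$$
Since $\mu_i(\{0\})<1$, we have $m_1(\mu_i)>0$, so $\phi_i$ is strictly increasing near $0$ and its local inverse $\phi_i^{-1}$ has an analogous order-$p$ expansion with leading term $s/m_1(\mu_i)$.

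The key idea is a change of variable. Set $\eta_i(x):=-Z_i(-x)$, which is nonnegative since $Z_i\in\mathcal{K}$ and strictly positive for small $x>0$ because $\eta_i(x)\sim m_1(\mu_{3-i})\,x$. The relations from Theorem~\ref{th2.3} become $\phi_1(\eta_1)=\phi_2(\eta_2)=:s$ and $\eta_1\eta_2=xs$. Substituting $\eta_i=\phi_i^{-1}(s)$ collapses the system to the single scalar equation
$$
G(s):=\frac{\phi_1^{-1}(s)\,\phi_2^{-1}(s)}{s}=x,
$$
with $G(0)=0$ and $G'(0)=1/(m_1(\mu_1)\,m_1(\mu_2))>0$. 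By standard composition/inversion of $o(\cdot)$ expansions, $G$ inherits an order-$p$ expansion at $0$ from $\phi_1^{-1}$ and $\phi_2^{-1}$, and so does its local inverse $G^{-1}$. Since
$$
K_\nu(-x)=-\phi_1(\eta_1(x))=-s(x)=-G^{-1}(x),
$$
this produces the required order-$p$ expansion of $K_\nu(-x)$, and Proposition~\ref{pr2.1} concludes $m_p(\nu)<\infty$.

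The main obstacle lies in the manipulation of $o(\cdot)$ expansions: verifying that composition (to get $\phi_i^{-1}$), division by $s$ (to define $G$), and inversion (to get $G^{-1}$) each preserve order-$p$ expansions. These are routine provided the leading coefficients do not vanish, which is guaranteed here by $m_1(\mu_i)>0$. A secondary technical point is confirming that the scalar $s(x):=G^{-1}(x)$ produced this way really coincides with $\phi_1(\eta_1(x))=\phi_2(\eta_2(x))$ coming from Theorem~\ref{th2.3}; this follows from the uniqueness clause of Theorem~\ref{th2.3} combined with the strict monotonicity of $\phi_i$ on a right neighborhood of $0$.
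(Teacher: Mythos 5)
Your argument is correct and follows essentially the same route as the paper: both reduce the claim, via the subordination relations of Theorem~\ref{th2.3} and the moment characterization of Proposition~\ref{pr2.1}, to producing an order-$p$ expansion of $K_{\mu_1\boxtimes\mu_2}(-x)$ as $x\downarrow 0$. The only difference is bookkeeping --- the paper obtains the order-$p$ expansions of the subordination functions $Z_j(-x)$ by a direct induction on the order of the expansion, whereas you collapse the system to the single implicit equation $G(s)=x$ and invert; both hinge on the nonvanishing of $m_1(\mu_i)$.
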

\begin{proof} 
By Theorem~\ref{th2.3}, there exist $Z_1(z)$ and $Z_2(z)$ 
from the~class $\mathcal K$ such that (\ref{2.8}) holds. 
By Proposition~\ref{pr2.1}, 
\begin{equation}\label{l.1}
K_{\mu_j}(-x)=-r_1(\mu_j) x+r_2(\mu_j) x^2+\dots+(-1)^pr_p(\mu_j) x^{p}+o(x^{p})
\quad\text{for}\quad x>0
\quad\text{and}\quad x\downarrow 0, 
\end{equation}
where $r_1(\mu),r_2(\mu),\dots,r_p(\mu)$ are the boolean cumulants.
Hence
\begin{equation}\label{l.1*}
K_{\mu_j}(Z_j(-x))=r_1(\mu_j) Z_j(-x)+r_2(\mu_j) Z_j^2(-x)+\dots
+r_p(\mu_j) Z_j^{p}(-x)+o(Z_j^{p}(-x))
\end{equation}
for $x>0,\,x\downarrow 0$ and $j=1,2$. From the~first relation 
of (\ref{2.8}) we conclude that, for the~same $x$,
\begin{equation}\notag
Z_j(-x)=-r_1(\mu_k)x+o(x),\qquad j,k=1,2,\quad j\ne k. 
\end{equation}
Let us assume that there exist real numbers 
$t_1^{(j)},t_2^{(j)},\dots,t_m^{(j)},\,j=1,2,\,m\le p-1$, such that
\begin{equation}\label{l.2}
Z_{j}(-x)=t_1^{(j)}x+t_2^{(j)}x^2+\dots+t_m^{(j)}x^{m}+o(x^{m})
\quad\text{for}\quad x>0
\quad\text{and}\quad x\downarrow 0. 
\end{equation}
Then from the~first relation of (\ref{2.8}) and from (\ref{l.1*}), 
(\ref{l.2}) we conclude that 
\begin{align}\label{l.3}
Z_j(-x)&=-r_1(\mu_k)x+r_2(\mu_k) xZ_k(-x)+\dots+(-1)^{p}r_p(\mu_k) xZ_k^{p-1}(-x)
+o(xZ_j^{p-1}(-x))\notag\\
&=t_1^{(j)}x+\dots+t_{m}^{(j)}x^{m}+t_{m+1}^{(j)}x^{m+1}+o(x^{m+1})
\end{align}
for real numbers $t_1^{(j)},t_2^{(j)},\dots,t_{m}^{(j)},t_{m+1}^{(j)},\,j=1,2$, and for 
$x>0,\,x\downarrow 0$.
Thus, induction may be used and (\ref{l.3}) holds for $m=p$. 
Since $K_{\mu_j}(Z_j(-x))=K_{\mu_1\boxtimes\mu_2}(-x),x>0$,
we easily obtain the~assertion of the~lemma from (\ref{l.1*}), (\ref{l.3}) with $m=p$ and 
from Proposition~\ref{pr2.1}. 
\end{proof}



\begin{lemma}\label{l2.2}
Let $\mu_1$ and $\mu_2$ be probability measures from $\bf M_+$ such that  
$m_{\alpha}(\mu_1)<\infty$ and $m_{\beta}(\mu_2)<\infty$, 
where $0<\alpha,\beta\le 1$. 
Then $m_{\alpha\beta}(\mu_1\boxtimes\mu_2)<\infty$.
\end{lemma}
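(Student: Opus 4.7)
The plan is to combine Theorem~\ref{th2.3} with Proposition~\ref{pr2.2} and recast the assertion as integrability of the Krein function of $\nu := \mu_1\boxtimes\mu_2$. Setting $g_j(y) := -K_{\mu_j}(-y)$ and $y_j(x) := -Z_j(-x)$, both are positive continuous monotone functions on $(0,\infty)$ vanishing at $0$, and Theorem~\ref{th2.3} yields
\begin{equation}\notag
y_1(x) y_2(x) = x\, g_1(y_1(x)) = x\, g_2(y_2(x)),\qquad g_\nu(x) := -K_\nu(-x) = g_j(y_j(x)),\quad j = 1,2.
\end{equation}
The first step is to bound the $g_j$ themselves: combining the elementary inequality $t/(1+t) \le t^{\alpha_j}$ with $1 + \psi_{\mu_j}(-y) \ge 1/c(\mu_j)$ on $(0,1]$ yields $g_j(y) \le c(\mu_j)\, m_{\alpha_j}(\mu_j)\, y^{\alpha_j}$, where $(\alpha_1,\alpha_2) := (\alpha,\beta)$.

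In the main case $\alpha,\beta \in (0,1)$, I would feed these bounds into the two subordination equations to obtain $y_2(x) \le C_1 x\, y_1(x)^{\alpha-1}$ and $y_1(x) \le C_2 x\, y_2(x)^{\beta-1}$, and then eliminate $y_1$ to get $y_2(x) \le C\, x^{\alpha/(\alpha+\beta-\alpha\beta)}$ for small $x > 0$. Hence $g_\nu(x) \le C' x^\delta$ with $\delta := \alpha\beta/(\alpha+\beta-\alpha\beta)$. The key point is that $(1-\alpha)(1-\beta) > 0$ forces $\delta > \alpha\beta$, whence $-\int_{(0,1]} K_\nu(-x)\, x^{-1-\alpha\beta}\, dx < \infty$, and Proposition~\ref{pr2.2} delivers $m_{\alpha\beta}(\nu) < \infty$.

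The principal obstacle is the boundary case, say $\beta = 1 > \alpha$ (the symmetric case $\alpha = 1 > \beta$ being identical thanks to commutativity of $\boxtimes$), where the previous estimate sharpens only to $\delta = \alpha\beta$, producing a logarithmically divergent integral. Here I would exploit the sharper linear asymptotic $g_2(y)/y \to m_1(\mu_2)$ as $y \downarrow 0$, valid by dominated convergence because $m_1(\mu_2) < \infty$. Dividing $y_1 y_2 = x\, g_2(y_2)$ by $y_2$ then yields $y_1(x) \sim m_1(\mu_2)\, x$ as $x \downarrow 0$, so $y_1(x) \le 2 m_1(\mu_2)\, x$ for sufficiently small $x$; monotonicity of $g_1$ and the substitution $v = 2 m_1(\mu_2)\, x$ bound $\int_0^\epsilon g_\nu(x)\, x^{-1-\alpha}\, dx$ by $(2 m_1(\mu_2))^\alpha \int_0^{2 m_1(\mu_2)\epsilon} g_1(v)\, v^{-1-\alpha}\, dv$, which is finite by Proposition~\ref{pr2.2} applied to $\mu_1$.

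Finally, the remaining case $\alpha = \beta = 1$ is exactly the $p=1$ instance of Lemma~\ref{l2.1*}. Throughout, the substantive work is the derivation of the power-law bound on $y_2(x)$ (respectively, in the boundary case, the sharp linear asymptotic on $y_1(x)$); the passage between existence of moments and integrability conditions on $K_\mu$ is provided by Proposition~\ref{pr2.2}, and the subordination functions supplied by Theorem~\ref{th2.3} serve as the bridge between the two measures.
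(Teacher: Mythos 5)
Your overall architecture --- subordination functions from Theorem~\ref{th2.3}, translation of fractional moments into integrability of the Krein function via Proposition~\ref{pr2.2}, and a separate boundary case $\beta=1$ handled through the linear asymptotics of the subordination function --- is exactly the paper's, and your boundary case is correct. But the main case $0<\alpha,\beta<1$ has a genuine gap in the elimination step. From $y_1(x)\le C_2\,x\,y_2(x)^{\beta-1}$ you cannot substitute into $y_2(x)\le C_1\,x\,y_1(x)^{\alpha-1}$: since $\alpha-1<0$, an \emph{upper} bound on $y_1$ yields a \emph{lower} bound on $y_1^{\alpha-1}$, so the inequality runs the wrong way. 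Equivalently, the two estimates read $y_2y_1^{1-\alpha}\le C_1x$ and $y_1y_2^{1-\beta}\le C_2x$, and no combination of them with positive exponents isolates an upper bound on $y_2$ alone. To extract any pointwise bound you also need lower bounds $g_j(y)\ge c_j y$ (equivalently $y_j(x)\ge cx$, which follow from $\mu_j(\{0\})<1$ and which you never invoke); with those, what actually comes out is $y_1(x)\le Cx^{\beta}$ and $y_2(x)\le Cx^{\alpha}$, hence only $g_\nu(x)=g_1(y_1(x))\le C'x^{\alpha\beta}$ --- precisely the borderline exponent, for which $\int_{(0,1]}g_\nu(x)x^{-1-\alpha\beta}\,dx$ diverges logarithmically. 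Your key claim $\delta=\alpha\beta/(\alpha+\beta-\alpha\beta)>\alpha\beta$ therefore rests entirely on the invalid elimination.

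The repair is the very device you use in the boundary case: stop at the subordination bound $-Z_1(-x)\le C m_\beta(\mu_2)x^{\beta}$ (the paper obtains it via the monotone function $\psi_{\mu_1}(Z_1(-x))/Z_1(-x)$ with nonzero limit, which plays the role of the missing lower bound), and do \emph{not} then replace $g_1$ by its pointwise majorant $Cy^{\alpha}$. Instead use monotonicity of $y\mapsto -K_{\mu_1}(-y)$ and the substitution $t=Cx^{\beta}$ to get
\[
-\int_{(0,x_0]}x^{-1-\alpha\beta}K_{\mu_1}(Z_1(-x))\,dx\;\le\; -\int_{(0,x_0]}x^{-1-\alpha\beta}K_{\mu_1}(-Cx^{\beta})\,dx\;=\;-c\int_{(0,t_0]}t^{-1-\alpha}K_{\mu_1}(-t)\,dt,
\]
which is finite by Proposition~\ref{pr2.2} and $m_\alpha(\mu_1)<\infty$. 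This exploits the full integrated information $\int_{(0,1]}(-K_{\mu_1}(-t))\,t^{-1-\alpha}\,dt<\infty$ rather than the lossy pointwise bound $-K_{\mu_1}(-t)\le Ct^{\alpha}$, and it closes the logarithmic gap; it is exactly how the paper concludes.
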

\begin{proof} 
If the assumptions of the lemma hold with $\alpha=\beta=1$ the assertion of 
the lemma follows from
Lemma~\ref{l2.1*}. 

Consider the case, where the assumptions of the lemma hold with 
$0<\alpha<\beta$ and $\beta=1$.
By Theorem~\ref{th2.3}, 
there exist $Z_1(z)$ and $Z_2(z)$ 
from the~class 
$\mathcal K$ such that (\ref{2.8}) holds. By Proposition~\ref{pr2.1} and (\ref{pr3*}), 
\begin{equation}\notag
K_{\mu_1}(-x)=-m_1(\mu_1)x(1+o(1))
\end{equation} 
for positive $x$ such that 
$x\downarrow 0$. Hence 
\begin{equation}\notag
K_{\mu_1}(Z_1(-x))=m_1(\mu_1)Z_1(-x)(1+o(1))
\end{equation} 
for the~same $x$ and, by (\ref{2.8}), 
we have 
\begin{equation}\notag
Z_2(-x)=-m_1(\mu_1)x(1+o(1)).
\end{equation}  
From this relation and Proposition~\ref{pr2.2} we conclude that
\begin{equation}\notag
-\int\limits_{(0,x_0]}x^{-1-\alpha}K_{\mu_2}(Z_2(-x))\,dx\le
-\int\limits_{(0,x_0]}x^{-1-\alpha}K_{\mu_2}(-2m_1(\mu)x)\,dx<\infty,
\end{equation}
where $x_0$ is a sufficiently small positive constant.
Since $K_{\mu_2}(Z_2(-x))=K_{\mu_1\boxtimes\mu_2}(-x)$, by
Proposition~\ref{pr2.2}, we arrive at the~assertion 
of the~lemma for $\alpha\in(0,1)$ and $\beta=1$.

Consider the case, where the assumptions of the lemma hold with
$0<\alpha,\beta<1$. 

As above, by Theorem~\ref{th2.3}, there exist $Z_1(z)$ 
and $Z_2(z)$ from the~class 
$\mathcal K$ such that (\ref{2.8}) holds. By Proposition~\ref{pr2.2}, we have
\begin{equation}\label{l2,1}
-\int\limits_{(0,1]}x^{-1-\alpha}K_{\mu_1}(-x)\,dx
\le\frac{c(\mu_1)m_{\alpha}(\mu_1)}{\alpha(1-\alpha)}
\quad\text{and}\quad
-\int\limits_{(0,1]}x^{-1-\beta}K_{\mu_2}(-x)\,dx
\le\frac{c(\mu_2)m_{\beta}(\mu_2)}{\beta(1-\beta)},
\end{equation}
where $c(\mu_j),\,j=1,2$, are constants defined in Proposition~\ref{pr2.2}.
We obtain from~(\ref{2.8}) the~relation, for $x>0$,
\begin{equation}\label{l2,2} 
K_{\mu_1}(Z_1(-x))=K_{\mu_2}(Z_2(-x)).
\end{equation}
Recalling (\ref{2.5}) we deduce from (\ref{l2,2}) that, for $x\in(0,x_0]$ with sufficiently small $x_0>0$,
\begin{align}\label{l2,3} 
-\frac 12 \psi_{\mu_1}(Z_1(-x))&\le -\psi_{\mu_2}(Z_2(-x))
\le -Z_2(-x)\int\limits_{[0,-1/Z_2(-x))}
u\,\mu_2(du)+\mu_2([-1/Z_2(-x),\infty))\notag\\
&\le 2m_{\beta}(\mu_2)(-Z_2(-x))^{\beta}.
\end{align}
Since, by (\ref{2.8}),
\begin{equation}\notag 
-Z_2(-x)=xK_{\mu_1}(Z_1(-x))/Z_1(-x))\le 2x\psi_{\mu_1}(Z_1(-x))/Z_1(-x)),\quad x\in(0,x_0],
\end{equation}
we get from (\ref{l2,3}) the~bound
\begin{equation}\label{l2,4} 
-\frac 12(\psi_{\mu_1}(Z_1(-x))/Z_1(-x))^{1-\beta}Z_1(-x)
\le 2^{1+\beta}m_{\beta}(\mu_2)x^{\beta},
\quad x\in(0,x_0].
\end{equation}
On the~other hand $f(x):=\psi_{\mu_1}(Z_1(-x))/Z_1(-x)$ is 
a positive strictly monotone function
such that $\lim_{x\to 0} f(x)$ is not equal to 0. Hence we obtain 
from (\ref{l2,4}) that
\begin{equation}\label{l2,5}
-Z_1(-x)\le c(\mu_1,\mu_2)m_{\beta}(\mu_2)x^{\beta},\quad x\in(0,x_0],
\end{equation}
where $c(\mu_1,\mu_2)$ is a~positive constant depending on $\mu_1$ and $\mu_2$ only. 
It remains to note,
using (\ref{l2,1}), that
\begin{align}
&-\int\limits_{(0,x_0]}x^{-1-\alpha\beta}K_{\mu_1}(Z_1(-x))\,dx\le
-\int\limits_{(0,x_0]}x^{-1-\alpha\beta}K_{\mu_1}(-c(\mu_1,\mu_2)
m_{\beta}(\mu_2)x^{\beta})\,dx\notag\\
&\le c(\mu_1,\mu_2,\alpha,\beta)(m_{\beta}(\mu_2))^{\alpha}<\infty,\notag
\end{align}
where $c(\mu_1,\mu_2,\alpha,\beta)$ is a~positive constant depending 
on $\mu_1,\mu_2,\alpha$, and $\beta$ 
only. By Proposition~\ref{pr2.2}, the~lemma is proved.
\end{proof}

\begin{proposition}\label{pr3.3*}
Let $T$ and $S$ be free random variables such that $m_{p/2}(\mu_{T^2}\boxtimes\mu_{S^2})<\infty$
with some $p>0$. Then $TS\in L^p(\mathcal A,\tau)$ and $\tau(|TS|^p)=m_{p/2}(\mu_{T^2}\boxtimes\mu_{S^2})<\infty$.
\end{proposition}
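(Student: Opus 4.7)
The plan is to reduce the statement to the distributional identity $\mu_{|TS|^2}=\mu_{T^2}\boxtimes\mu_{S^2}$; once this identity is in hand, both the finiteness of $\tau(|TS|^p)$ and its evaluation follow from the spectral theorem applied to $|TS|$.

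First, since $\tilde{\mathcal A}$ is an algebra (Murray--von Neumann, as recalled in Section~3), the product $TS$ lies in $\tilde{\mathcal A}$ and, by selfadjointness of $T$ and $S$, one has $|TS|^2=(TS)^*(TS)=ST^2S$. The key step is then to introduce the auxiliary affiliated operator $A:=S|T|=S(T^2)^{1/2}$ and observe that
\[
AA^*=ST^2S=|TS|^2,\qquad A^*A=|T|S^2|T|.
\]
In a finite tracial $W^*$-probability space the distributions of $AA^*$ and $A^*A$ coincide: writing the polar decomposition $A=V|A|$, bounded spectral projections of $V|A|^2V^*$ and $|A|^2$ are related by $V(\cdot)V^*$, and the trace property $\tau(VEV^*)=\tau(V^*VE)=\tau(E)$ on the support of $|A|$ gives $\mu_{AA^*}=\mu_{A^*A}$. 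Thus $\mu_{|TS|^2}=\mu_{|T|S^2|T|}$.

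Next, since $T$ and $S$ are free, applying the Borel functional calculus shows that $|T|=\phi(T)$ and $S^2=\psi(S)$ are also free: any bounded Borel function of $\phi(T)$ is a bounded Borel function of $T$, so the defining vanishing condition for alternating products carries over. Moreover $\mu_{|T|^2}=\mu_{T^2}$, so the Bercovici--Voiculescu definition of the free multiplicative convolution recalled before Theorem~\ref{th2.3} gives $\mu_{|T|S^2|T|}=\mu_{|T|^2}\boxtimes\mu_{S^2}=\mu_{T^2}\boxtimes\mu_{S^2}$. Combining this with the previous step yields $\mu_{|TS|^2}=\mu_{T^2}\boxtimes\mu_{S^2}$, and the spectral theorem applied to the positive selfadjoint operator $|TS|$ affiliated with $\mathcal A$ then gives
\[
\tau(|TS|^p)=\int_{\mathbb R_+}x^{p/2}\,\mu_{|TS|^2}(dx)=m_{p/2}(\mu_{T^2}\boxtimes\mu_{S^2}),
\]
which is finite by hypothesis, so $TS\in L^p(\mathcal A,\tau)$.

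The main obstacle I anticipate is the justification of the identity $\mu_{AA^*}=\mu_{A^*A}$ for the affiliated, possibly unbounded operator $A=S|T|$, and the companion fact that $\mu_{|T|S^2|T|}$ really coincides with $\mu_{T^2}\boxtimes\mu_{S^2}$ when the positive affiliated factors $|T|$ and $S^2$ are no longer bounded; the original Biane/Bercovici--Voiculescu setup must be invoked in the affiliated setting. Both assertions are standard consequences of the measurable-operator machinery recalled in Section~3, but the proof should pause to state them cleanly rather than treat the manipulation $|TS|^2=ST^2S$ and $AA^*\sim A^*A$ as purely formal.
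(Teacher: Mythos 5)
Your proof is correct and follows essentially the same route as the paper: both reduce $\tau(|TS|^p)$ to $\tau((|T|S^2|T|)^{p/2})$ by a polar-decomposition/traciality argument and then invoke the definition of $\boxtimes$ applied to the free positive elements $T^2$ and $S^2$. The only cosmetic difference is that the paper uses the polar decomposition $T=u|T|$ with $u$ unitary (so $(TS)(TS)^*=u|T|S^2|T|u^*$ directly), whereas you pass through $A=S|T|$ and the general identity $\mu_{AA^*}=\mu_{A^*A}$.
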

\begin{proof} 
Since the distribution of $\abs{T}S^2\abs{T}$ is $\mu_{T^2}\boxtimes\mu_{S^2}$, 
we have
\begin{equation}\notag
\tau((\abs{T}S^2\abs{T})^{p/2})=m_{p/2}(\mu_{T^2}\boxtimes\mu_{S^2})<\infty. 
\end{equation}
Using the polar decomposition $T=u\abs{T}$, where $u\in{\mathcal A}$ is a unitary element,
we obtain
\begin{align}
\tau(\abs{TS}^{p})&
=\tau((u\abs{T}S^2\abs{T}u^*)^{p/2})=
\tau(u(\abs{T}S^2\abs{T})^{p/2}u^*)\notag\\
&=\tau((\abs{T}S^2\abs{T})^{p/2})
=m_{p/2}(\mu_{T^2}\boxtimes \mu_{S^2})<\infty.\notag
\end{align}
The proposition is proved.
\end{proof}

\begin{lemma}\label{l2.3}
Let $\mu_1,\mu_2,\dots,\mu_n$ be probability measures from $\bf M$ such that  
$\rho_1(\mu_1\boxplus\mu_2\boxplus\dotsm\boxplus\mu_n)<\infty$. 
Then $\rho_1(\mu_1)<\infty,\dots,
\rho_1(\mu_n)<\infty$.
\end{lemma}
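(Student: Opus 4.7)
The plan is to reduce to the case $n = 2$ and then to exploit freeness with a bounded spectral projection. For each $j$, associativity of $\boxplus$ gives $\mu_1\boxplus\cdots\boxplus\mu_n = \mu_j\boxplus(\boxplus_{k\ne j}\mu_k)$, so the general claim follows once we show: for $\mu,\nu\in\mathbf{M}$, $\rho_1(\mu\boxplus\nu)<\infty$ implies $\rho_1(\mu),\rho_1(\nu)<\infty$.

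Realize free self-adjoint elements $T_1,T_2\in\tilde{\mathcal A}_{sa}$ with distributions $\mu,\nu$; then $T:=T_1+T_2$ has distribution $\mu\boxplus\nu$ and $\tau(|T|)<\infty$. By symmetry it suffices to establish $\tau(|T_1|)<\infty$. Fix $L>0$ with $t:=\tau(p_L)>0$, where $p_L:=\chi_{[-L,L]}(T_2)\in W^*(T_2)\cap\mathcal A$ is the spectral projection of $T_2$ onto $[-L,L]$; then $p_L$ is bounded, free from $T_1$ (since $T_1$ is free from $W^*(T_2)$), and $\|p_L T_2\|\le L$. For any projection $p$ and self-adjoint $X$, the Jordan decomposition $X=X_+-X_-$ gives $|pXp|\le pX_+p+pX_-p=p|X|p$, hence $\tau(|pXp|)\le\tau(|X|)$. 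Applied to $X=T$ this yields $\tau(|p_L T p_L|)\le\tau(|T|)<\infty$. Since $p_L T p_L=p_L T_1 p_L+p_L T_2 p_L$ and $\|p_L T_2 p_L\|\le L$, the noncommutative Minkowski inequality~\eqref{au1.4} gives
\begin{equation*}
\tau(|p_L T_1 p_L|)\le\tau(|p_L T p_L|)+\tau(|p_L T_2 p_L|)\le\tau(|T|)+Lt<\infty.
\end{equation*}

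The delicate step is to pass from the $L^1$-finiteness of the corner $p_L T_1 p_L$ back to that of $T_1$. To this end, consider the compressed tracial $W^*$-probability space $(p_L\mathcal A p_L,\,t^{-1}\tau)$, in which $p_L T_1 p_L$ is a self-adjoint element whose distribution $\pi$ is, by Voiculescu's free-compression formula, related to $\mu_{T_1}$ via $R_\pi(z)=R_{\mu_{T_1}}(tz)$ (equivalently, the $n$-th free cumulants satisfy $\kappa_n(\pi)=t^{n-1}\kappa_n(\mu_{T_1})$). For a probability measure on $\mathbb R$, the finiteness of $\rho_1$ is equivalent to the $R$-transform admitting a finite real boundary value at $0$ (namely $m_1(\mu)$), a property clearly invariant under the variable rescaling $z\mapsto tz$. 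Therefore $\rho_1(\pi)=t^{-1}\tau(|p_L T_1 p_L|)<\infty$ forces $\rho_1(\mu_{T_1})=\tau(|T_1|)<\infty$.

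The main obstacle is precisely this final compression step: all preceding reductions are elementary operator theory (spectral truncation, monotonicity of $\tau(|\cdot|)$ under compression by a projection, and Minkowski), but transferring $L^1$-boundedness from the corner back to the ambient algebra uses genuinely free-probabilistic input --- the scaling behavior of the $R$-transform under free compression by a free projection.
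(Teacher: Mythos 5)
Your reduction to $n=2$ and the first half of the argument are sound in substance: compressing by the bounded spectral projection $p_L$ of $T_2$ and using Minkowski does give $\tau(\abs{p_LT_1p_L})<\infty$. (One local repair: the operator inequality $\abs{pX_+p-pX_-p}\le pX_+p+pX_-p$ you invoke is false in general for positive operators, so $\abs{pXp}\le p\abs{X}p$ does not follow from the Jordan decomposition; the trace inequality $\tau(\abs{pXp})\le\tau(\abs{X})$ that you actually need is nevertheless true, directly from \eqref{au1.1}.) The fatal problem is the last step, where you pass from the corner back to $T_1$. First, the cumulant form of the free compression formula, $\kappa_n(\pi)=t^{n-1}\kappa_n(\mu_{T_1})$, presupposes that all moments of $\mu_{T_1}$ exist, which is exactly what is in question. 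Second, and decisively, the moment criterion you use is false in the direction you need: finiteness of the nontangential boundary value of the $R$-transform at $0$ does \emph{not} imply $\rho_1(\mu)<\infty$. For any symmetric $\mu$ the $R$-transform is an odd function and tends to $0$ at the origin, so it has the finite boundary value $0$ even when $\int\abs{x}\,\mu(dx)=\infty$ (this is the free analogue of the classical fact that a characteristic function can be differentiable at $0$ without the first absolute moment being finite; odd-order moments are the delicate case in Benaych-Georges' result). Moreover, unwinding the compression formula gives $\pi=D_t(\mu_{T_1}^{\boxplus 1/t})$, so the implication you need, namely $\rho_1(\mu_{T_1}^{\boxplus 1/t})<\infty\Rightarrow\rho_1(\mu_{T_1})<\infty$, is essentially the lemma itself; the argument is circular even if the analytic compression formula is granted.

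The paper avoids this entirely by never trying to recover the full distribution of $T_1$ from a compression. It uses \emph{one-sided} spectral truncations $p_{T_1}^{(t)}=e_{T_1}([0,t])$ and $p_{T_2}^{(t)}=e_{T_2}([0,t])$ of both variables, compresses $T_1+T_2$ to $p_{T_1}^{(t)}(T_1+T_2)p_{T_2}^{(t)}$, and then computes the \emph{trace} (not the $L^1$-norm) of the result by the freeness factorization $\tau(p_{T_2}^{(t)}\,p_{T_1}^{(t)}T_1)=\tau(p_{T_2}^{(t)})\,\tau(p_{T_1}^{(t)}T_1)$, which is legitimate because $p_{T_1}^{(t)}T_1$ is bounded. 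This yields
$\tau(p_{T_2}^{(t)})\int_{[0,t]}u\,\mu_1(du)+\tau(p_{T_1}^{(t)})\int_{[0,t]}u\,\mu_2(du)\le\tau(\abs{T_1+T_2})$, a sum of two \emph{nonnegative} terms bounded uniformly in $t$; letting $t\to\infty$ controls the positive parts of both first moments, and the mirror truncations handle the negative parts. The essential ideas you are missing are the separate control of positive and negative parts via one-sided truncations, and the use of the trace factorization for free elements rather than an $R$-transform moment criterion.
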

\begin{proof}
  It suffices to prove the case $n=2$, the general case follows by induction.

  Let $T_1,T_2$ be free random variables with distributions 
  $\mu_1,\mu_2$,
  respectively, such that $\tau(\abs{T_1+T_2})<\infty$. 
  Without loss of generality we may assume that the distributions
  $\mu_1$, $\mu_2$ are not point masses.
  For, if $T$ is any measurable operator
  and $\lambda$ is any constant, 
  by a simple application of the Minkowski inequality
  we have
  $\norm{T}_1<\infty$ if and only if $\norm{T+\lambda I}_1<\infty$.
  By the same argument
  we can ensure that the spectra of each
  $T_1$ and $T_2$ are not contained in either the positive
  or the negative real axis.
  
  Consider the~projections $p_{T_1}^{(t)}=e_{T_1}([0,t])$ 
  and $p_{T_2}^{(t)}=e_{T_2}([0,t])$,
  where $t>0$ and $e_{T_1},e_{T_2}$ are an $\mathcal A$-valued
  spectral measures on $\mathbb R$, which are countably additive in 
  the weak$^*$ topology on $\mathcal A$.
  By our assumptions, these projections are nonzero for sufficiently
  large $t$.
  Set $T_j^{(t)}=p_{T_1}^{(t)}T_jp_{T_2}^{(t)}$ for $j=1,2$
  and note that by (\ref{au1.1}),
  \begin{equation}\label{l3,1}
  \tau(\bigabs{T_1^{(t)}+T_2^{(t)}})\le\tau(\abs{T_1+T_2})<\infty.
  \end{equation}
  On the~other hand, since the~random variables $p_{T_1}^{(t)}T_1$ 
  and $T_2 p_{T_2}^{(t)}$ are bounded,
  using freeness of the~corresponding random variables, 
  we have 
  $\tau(T_1^{(t)}+T_2^{(t)})
  =\tau(p_{T_1}^{(t)}T_1)\,\tau(p_{T_2}^{(t)})
    +\tau(p_{T_1}^{(t)})\,\tau(T_2 p_{T_2}^{(t)})$ and we obtain from (\ref{l3,1}) that
  \begin{equation}\notag
    \tau(p_{T_2}^{(t)})
    \int\limits_{[0,t]}u\,\mu_1(du)
    +
    \tau(p_{T_1}^{(t)})
    \int\limits_{[0,t]}u\,\mu_2(du)
    \le\tau(\abs{T_1+T_2})<\infty
  \end{equation}
  and in the limit  $t\to\infty$ this implies
  both
  \begin{equation}\notag
    \int\limits_{[0,\infty)}u\,\mu_1(du)<\infty
    \qquad
    \int\limits_{[0,\infty)}u\,\mu_2(du)<\infty.
  \end{equation}
  In the~same way we prove that
  \begin{equation}\notag
  \int\limits_{(-\infty,0)}u\,\mu_1(du) < \infty
    \qquad
    \int\limits_{(-\infty,0)}u\,  \mu_2(du)<\infty.
  \end{equation}
  Thus we have proved that $\rho_1(\mu_j)
  =\int\limits_{(-\infty,\infty)}u\,  \mu_j(du)<\infty$
  for $j=1,2$.
\end{proof}
\begin{proposition}\label{pr3.3}
Let $\{T_j\}_{j=1}^k$ be a~family of free elements in $\tilde{A}_{sa}$ 
such that
\begin{equation}\notag
\tau(\abs{T_j}^s)<\infty\quad\text{for all}\quad
s\in\mathbb N\quad\text{and}\quad j=1,2,\dots,k.
\end{equation}
Then $\tau(T_{j_1}T_{j_2}\dotsm T_{j_n})=0$
whenever $\tau(T_{j_l})=0,\,l=1,2,\dots,n$, and all alternating
sequences $j_1,j_2,\dots,j_n$ of $1$'s, $2$'s, and $k$'s, 
i.e., $j_1\ne j_2\ne\dots\ne j_n$. 
\end{proposition}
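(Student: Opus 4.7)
The plan is to reduce the claim to the definitional freeness identity, which is stated only for bounded continuous functions, by truncating each $T_j$ via functional calculus and passing to the limit with the noncommutative H\"older inequality (\ref{au1.3}). For $N\in\IN$, fix a bounded continuous function $f_N:\IR\to\IR$ which agrees with the identity on $[-N,N]$, vanishes outside $[-N-1,N+1]$, and is linearly interpolated on the two intermediate intervals; then $|f_N(x)|\le|x|$ and $|f_N(x)-x|\le 2|x|\,\chi_{\{|x|>N\}}$ for every $x\in\IR$. Set $T_j^{(N)}:=f_N(T_j)\in\mathcal A$, $c_j^{(N)}:=\tau(T_j^{(N)})$, and let $u_j^{(N)}(x):=f_N(x)-c_j^{(N)}$. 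Each $u_j^{(N)}$ is bounded continuous with $\tau(u_j^{(N)}(T_j))=0$, so the freeness of $\{T_j\}_{j=1}^k$ gives
\begin{equation*}
\tau\bigl(u_{j_1}^{(N)}(T_{j_1})u_{j_2}^{(N)}(T_{j_2})\cdots u_{j_n}^{(N)}(T_{j_n})\bigr)=0
\end{equation*}
for every alternating sequence $j_1,\dots,j_n$ and every $N\in\IN$.

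Next I would establish that $u_j^{(N)}(T_j)=T_j^{(N)}-c_j^{(N)}I$ converges to $T_j$ in $L^p(\mathcal A,\tau)$ for every $p\ge 1$. By the functional calculus and the spectral identity $\tau(g(T_j))=\int g\,d\mu_{T_j}$,
\begin{equation*}
\norm{T_j^{(N)}-T_j}_p^p=\int_{\IR}|f_N(x)-x|^p\,\mu_{T_j}(dx)\le 2^p\int_{\{|x|>N\}}|x|^p\,\mu_{T_j}(dx)\to 0
\end{equation*}
as $N\to\infty$, by scalar dominated convergence, since $\tau(|T_j|^p)<\infty$ by hypothesis. Applying this with $p=1$ gives $c_j^{(N)}=\tau(f_N(T_j))\to\tau(T_j)$, and the Minkowski inequality (\ref{au1.4}) then yields $\norm{u_j^{(N)}(T_j)-T_j+\tau(T_j)I}_p\to 0$ for every $p\ge 1$; in particular the sequence $\norm{u_j^{(N)}(T_j)}_n$ stays bounded in $N$.

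Finally I would pass to the limit in the displayed freeness identity. Writing $X_l^{(N)}:=u_{j_l}^{(N)}(T_{j_l})$ and $X_l:=T_{j_l}-\tau(T_{j_l})I$, a telescoping expansion combined with H\"older's inequality (\ref{au1.3}) used with exponents $p_1=\dots=p_n=n$ produces
\begin{equation*}
\norm{X_1^{(N)}\cdots X_n^{(N)}-X_1\cdots X_n}_1\le\sum_{l=1}^n\Big(\prod_{k<l}\norm{X_k^{(N)}}_n\Big)\norm{X_l^{(N)}-X_l}_n\Big(\prod_{k>l}\norm{X_k}_n\Big),
\end{equation*}
which tends to $0$ as $N\to\infty$ by the previous step. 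Since $\tau$ is continuous on $L^1(\mathcal A,\tau)$, the trace of the product converges to $\tau(X_1\cdots X_n)$, which equals $\tau(T_{j_1}\cdots T_{j_n})$ under the additional hypothesis $\tau(T_{j_l})=0$. Hence the latter is $0$.

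The only real obstacle is guaranteeing that all $n$ factors of the product can be approximated simultaneously in a norm compatible with taking products in the possibly unbounded algebra $\tilde{\mathcal A}$; the assumption $\tau(|T_j|^s)<\infty$ for every $s\in\IN$ is exactly what places each $T_j$ in $L^n(\mathcal A,\tau)$ for every $n$ and makes the H\"older-based telescoping bound above available. Once that is secured, the argument is a routine truncation-and-limit application of the definition of freeness.
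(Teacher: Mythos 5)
Your proof is correct. The paper itself gives no argument for Proposition~\ref{pr3.3}: it simply declares the statement well known and refers the reader to the methods of Bercovici and Voiculescu~\cite{BeVo:1993}. Your truncation-and-limit argument is essentially the standard proof one would extract from that reference, and it fits the paper's framework well: the definition of freeness in Section~2 is stated precisely for bounded continuous functions with vanishing trace, so your centered truncations $u_{j_l}^{(N)}(T_{j_l})$ satisfy the freeness identity exactly, and the passage to the limit is controlled by the H\"older inequality (\ref{au1.3}) together with the moment hypothesis, which places each $T_j$ in $L^n(\mathcal A,\tau)$ for every $n$. Two small points are worth making explicit if you write this up: first, the bound $\norm{T_j^{(N)}-T_j}_p^p=\int_{\IR}\abs{f_N(x)-x}^p\,\mu_{T_j}(dx)$ uses the spectral identity for an \emph{unbounded} nonnegative Borel function, which follows from the bounded case by monotone convergence and the normality of $\tau$; second, the telescoping products $X_1^{(N)}\dotsm X_{l-1}^{(N)}(X_l^{(N)}-X_l)X_{l+1}\dotsm X_n$ are legitimate elements of $L^1(\mathcal A,\tau)$ because $\tilde{\mathcal A}$ is an algebra and the product of $n$ factors from $L^n$ lies in $L^1$, as recorded after (\ref{au1.1}). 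With these remarks your argument is a complete, self-contained proof of a statement the paper leaves to the literature.
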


This proposition is well-known. In particular one can obtain a proof
using arguments of the paper by Bercovici and Voiculescu~\cite{BeVo:1993}.

\section{Proofs of the main results}

In order to prove Theorems~\ref{th2.1} and \ref{th2.2} we need the~following lemma.

\begin{lemma}\label{lem5.1}
Let $(T_j)_{j\in I}$ be free random variables 
in $W^*$-probability space $(\mathcal A,\tau)$
such that $\tau(\abs{T_j}^d)<\infty$ for some $d\in\{2,3,\dots\}$
and any $j\in I$.
Then
\begin{equation}
\tau(\abs{T_{k_1}^{n_1}T_{k_2}^{n_2}\dotsm T_{k_s}^{n_s}})<\infty
\end{equation}
for any choice of indices $k_1\ne k_2\ne\dots\ne k_s,\,s\ge 2$
and any choice of strictly positive integers $n_1,n_2,\dots,n_s$ 
such that $n_1+n_2+\dots+n_s=d+1$. 
\end{lemma}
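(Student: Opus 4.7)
The proof proceeds by induction on $s\geq 2$, combining the free multiplicative convolution machinery from Proposition~\ref{pr3.3*} and Lemma~\ref{l2.2} with a centering argument that exploits $\tau(\hat T)=0$.

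\emph{Base case $s=2$.} Since $k_1\neq k_2$, the factors $T_{k_1}^{n_1}$ and $T_{k_2}^{n_2}$ are free. Proposition~\ref{pr3.3*} gives $\tau(|X|)=m_{1/2}(\mu_{T_{k_1}^{2n_1}}\boxtimes\mu_{T_{k_2}^{2n_2}})$, and the hypothesis $\tau(|T_{k_i}|^d)<\infty$ is equivalent to $m_{\alpha_i}(\mu_{T_{k_i}^{2n_i}})<\infty$ with $\alpha_i:=\min\{1,d/(2n_i)\}$. Lemma~\ref{l2.2} (together with Lemma~\ref{l2.1*} when $\alpha_i=1$) then yields $m_{\alpha_1\alpha_2}$ finite on the free multiplicative convolution. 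A short case analysis using $n_1+n_2=d+1$ and $d\geq 2$ verifies $\alpha_1\alpha_2\geq 1/2$ in every case: the nontrivial regime, both $n_i>d/2$, gives $\alpha_1\alpha_2\geq d^2/(d+1)^2\geq 1/2$ for $d\geq 3$, and the remaining case $d=2$ forces $\{n_1,n_2\}=\{1,2\}$, which is checked directly.

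\emph{Inductive step.} For $s\geq 3$, I would center each factor: write $T_{k_j}^{n_j}=\hat T_j+c_j I$ with $c_j:=\tau(T_{k_j}^{n_j})$, finite because $n_j\leq d$ forces $T_{k_j}^{n_j}\in L^1$ by Jensen. Expanding yields
\[
X=\sum_{S\subseteq\{1,\ldots,s\}}\Bigl(\prod_{j\notin S}c_j\Bigr)\,W_S,
\]
where $W_S$ is the product of the $\hat T_j$, $j\in S$, in the natural order. For every $S\subsetneq\{1,\ldots,s\}$, the total degree $N_S:=\sum_{j\in S}n_j$ is at most $d$, and the noncommutative Hölder inequality~\eqref{au1.3} with exponents $p_j=N_S/n_j$ (whose reciprocals sum to $1$) gives $\|W_S\|_1\leq\prod_{j\in S}\|T_{k_j}\|_d^{n_j}<\infty$.

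\emph{The leading centered term.} The only remaining summand is the fully-centered product $W:=\hat T_1\hat T_2\cdots\hat T_s$. My plan is to peel off a single $T_{k_1}$, writing $T_{k_1}^{n_1}=T_{k_1}\cdot T_{k_1}^{n_1-1}$, so that $X=T_{k_1}\,X'$ with $X':=T_{k_1}^{n_1-1}T_{k_2}^{n_2}\cdots T_{k_s}^{n_s}$ alternating of total degree $d$. Noncommutative Hölder then gives $\|X\|_1\leq\|T_{k_1}\|_d\,\|X'\|_{d/(d-1)}$, and the task reduces to showing $X'\in L^{d/(d-1)}$. For this I would center the interior factors of $X'$ and use the freeness moment-cumulant formula: every summand that would require a moment $\tau(T_{k_j}^m)$ with $m>d$ is multiplied by a factor $\tau(\hat T_j)=0$ and therefore vanishes, leaving only summands that involve moments of order $\leq d$, all finite by hypothesis.

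\emph{Main obstacle.} The chief technical burden is the combinatorial bookkeeping: when some $n_j$ exceed $d/2$, the naive expansion formally produces apparent divergences like $\tau(T_{k_j}^{2n_j})=\infty$, and one must verify that each such term is indeed cancelled by a centering zero. I would make this rigorous by performing the computation first on the bounded spectral truncations $T_j^{(N)}:=T_j\,\mathbf{1}_{|T_j|\leq N}$, for which all moments exist and the freeness formulas apply literally, and then passing to $N\to\infty$ by dominated convergence; the limit is well-defined precisely because after the cancellations only moments of order $\leq d$ survive.
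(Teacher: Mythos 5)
Your base case $s=2$ is sound and is essentially the paper's own argument: pass to $\mu_{T_{k_1}^{2n_1}}\boxtimes\mu_{T_{k_2}^{2n_2}}$, use Lemma~\ref{l2.1*}/Lemma~\ref{l2.2} to get a moment of order at least $1/2$, and conclude by Proposition~\ref{pr3.3*}. The exponent bookkeeping $\alpha_1\alpha_2\ge 1/2$ checks out. The problem is the case $s\ge 3$, where your argument has a genuine gap. After the centering expansion, the proper subsets $W_S$ are indeed handled by H\"older (no induction needed), but the full-degree term is not: you reduce to showing $X'=T_{k_1}^{n_1-1}T_{k_2}^{n_2}\dotsm T_{k_s}^{n_s}\in L^{d/(d-1)}$, and the justification you offer --- centering plus the moment-cumulant formula, with terms containing $\tau(\hat T_j)=0$ dropping out --- controls the \emph{trace} $\tau(X')$ of an alternating product, not the quantity $\tau(\abs{X'}^{d/(d-1)})$ that membership in $L^{d/(d-1)}$ requires. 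Freeness cancellations say nothing about $\tau(\abs{\,\cdot\,}^{q})$ for a fractional power of $X'^*X'$; indeed $\abs{X'}^{d/(d-1)}$ has formal total degree $d^2/(d-1)>d$, so the reduction is no easier than the original problem. The truncation-and-dominated-convergence remedy is circular: dominated convergence needs an integrable dominant, i.e.\ exactly the absolute integrability you are trying to prove, and the bounds obtained from cancellations in $\tau(\,\cdot\,)$ are not uniform bounds on $\tau(\abs{\,\cdot\,})$.

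The missing idea, which is the heart of the paper's proof, is to exploit the ``free H\"older gain'' beyond $s=2$: for free $T,S$ with finite $d$-th moments, Lemma~\ref{l2.1*} and Proposition~\ref{pr3.3*} give $\tau(\abs{TS}^{d})=m_{d/2}(\mu_{T^2}\boxtimes\mu_{S^2})<\infty$ when $d$ is even (and $\tau(\abs{TS}^{d-1})<\infty$ when $d$ is odd), i.e.\ the product of two free variables keeps the \emph{full} moment order rather than the halved order that ordinary H\"older would give. The paper then regroups $T_{k_1}^{n_1}\dotsm T_{k_s}^{n_s}$ as $T_{k_1}^{n_1-1}(T_{k_1}T_{k_2})T_{k_2}^{n_2-1}\dotsm$, inserting such paired factors so that the H\"older exponents $\frac{n_1-1}{d}+\frac 1d+\frac{n_2-1}{d}+\dots+\frac{n_s}{d}$ sum exactly to $1$ despite the total degree being $d+1$; for odd $d$ a short case analysis with pairings $(T_{k_i}T_{k_{i+1}})$ and exponent $d-1$ does the job. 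No centering and no induction on $s$ is needed. You should replace your treatment of the fully-centered term by this pairing device.
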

\begin{proof}
We may assume that the distributions of $\abs{T_i}$ are not point
masses, otherwise the concerning operator is bounded
and the conclusion is trivial.
Let $d=2p,\,p\in\mathbb N$. Then we write 
$T_{k_1}^{n_1}T_{k_2}^{n_2}\dotsm T_{k_s}^{n_s}=
T_{k_1}^{n_1-1}(T_{k_1}T_{k_2})T_{k_2}^{n_2-1}\dotsm T_{k_s}^{n_s}$. 
By Lemma~\ref{l2.1*} and by Proposition~{\ref{pr3.3*}, we have
\begin{align}\label{5.8}
\tau(\abs{T_{k_1}T_{k_2}}^{d})
=m_p(\mu_{T_{k_1}^2}\boxtimes \mu_{T_{k_2}^2})<\infty.
\end{align}
Applying the~H\"older inequality (\ref{au1.3}), we easily obtain, 
using (\ref{5.8}),
\begin{align}\label{5.9}
\tau(&\abs{T_{k_1}^{n_1-1} (T_{k_1}T_{k_2})T_{k_2}^{n_2-1}\dotsm T_{k_s}^{n_s}})\notag\\
&\le(\tau(\abs{T_{k_1}}^d))^{(n_1-1)/d}(\tau(\abs{T_{k_1}T_{k_2}}^d))^{1/d}
(\tau(\abs{T_{k_2}}^d))^{(n_2-1)/d}
\dotsm(\tau(\abs{T_{k_s}}^d))^{n_s/d}<\infty.
\end{align}

Let $d=2p+1,\,p\in\mathbb N$. Consider first the case $s=2$, i.e.,
terms of the form $T_{k_1}^{n_1}T_{k_2}^{n_2}$ with
$k_1\ne k_2$ and $n_1+n_2=d+1$. 
By the~assumptions of the lemma, we see that
$m_{d/(2n_1)}(\mu_{T_{k_1}^{2n_1}})<\infty$ and 
$m_{d/(2n_2)}(\mu_{T_{k_2}^{2n_2}})<\infty$.

If $n_1=n_2=p+1$, then $\frac d{2n_1}=\frac d{2n_2}=1-\frac 1{2p+2}$ and 
$\Big(1-\frac 1{2p+2}\Big)^2>\frac 12$. By Lemma~\ref{l2.2}, we conclude that
$m_{1/2}(\mu_{T_{k_1}^{2n_1}}\boxtimes\mu_{T_{k_2}^{2n_2}})<\infty$ and then, by Proposition~\ref{pr3.3*},
we have
\begin{equation}\label{5.10}
\tau(\abs{T_{k_1}^{n_1}T_{k_2}^{n_2}})=
m_{1/2}(\mu_{T_{k_1}^{2n_1}}\boxtimes\mu_{T_{k_2}^{2n_2}})<\infty.  
\end{equation}

If $1\le n_1<p+1$ and $p+1<n_2\le 2p+1$, then $\frac d{2n_1}>1$ 
and $\frac 12\le\frac d{2n_2}<1$. 
By Lemma~\ref{l2.2}, $m_{1/2}(\mu_{T_{k_1}^{2n_1}}\boxtimes\mu_{T_{k_2}^{2n_2}})<\infty$ and,
by Proposition~\ref{pr3.3*}, we have the~relation~(\ref{5.10}) again. 

Consider now terms of the~form $T_{k_1}^{n_1}T_{k_2}T_{k_3}^{n_3}$ with
$k_1\ne k_2,\,k_3\ne k_2$ and $n_1+n_3=d$. Let for definiteness $n_1\le p$ and $n_3\ge p+1$.
Since $m_1(\mu_{T_{k_2}^2})<\infty$ and
$m_{d/(2n_3)}(\mu_{T_{k_3}^{2n_3}})<\infty$, 
we get, by
Lemma~\ref{l2.2}, that $m_{d/(2n_3)}(\mu_{T_{k_2}^2}
\boxtimes\mu_{T_{k_3}^{2n_3}})<\infty$.
By Proposition~\ref{pr3.3*}, we see that $\tau(\abs{T_{k_2}T_{k_3}^{n_3}}^{d/n_3})
=m_{d/(2n_3)}(\mu_{T_{k_2}^2}\boxtimes\mu_{T_{k_3}^{2n_3}})<\infty$.
Then, using the~H\"older inequality (\ref{au1.3}), we obtain
\begin{equation}\label{5.11a}
\tau(\abs{T_{k_1}^{n_1}T_{k_2}T_{k_3}^{n_3}})\le(\tau(\abs{T_{k_1}^{n_1}}^{d/n_1}))^{n_1/d}
(\tau(\abs{T_{k_2}T_{k_3}^{n_3}}^{d/n_3}))^{n_3/d}<\infty.
\end{equation}
 
Now consider a term of the~form $T_{k_1}^{n_1}T_{k_2}^{n_2}T_{k_3}^{n_3}$ with
$k_1\ne k_2\ne k_3$ and $n_1+n_2+n_3=d+1,\,n_1\ge 1,n_2\ge 2,n_3\ge 1$. 
Rewrite it in the~form
\begin{equation}\notag
T_{k_1}^{n_1}T_{k_2}^{n_2}T_{k_3}^{n_3}
=T_{k_1}^{n_1-1}(T_{k_1}T_{k_2})T_{k_2}^{n_2-2}(T_{k_2}T_{k_3})
T_{k_3}^{n_3-1}
\end{equation}
and note that as in the proof of (\ref{5.8}) we have
\begin{equation}\label{5.11}
\tau(\abs{T_{k_1}T_{k_2}}^{d-1})=\tau(\abs{T_{k_1}T_{k_2}}^{2p})
=m_p(\mu_{T_{k_1}^2}\boxtimes \mu_{T_{k_2}^2})<\infty
\end{equation}
and similarly
\begin{equation}\label{5.12}
\tau(\abs{T_{k_2}T_{k_3}}^{d-1}) <\infty.
\end{equation}
Now in view of (\ref{5.11}) and (\ref{5.12}), we deduce
with the help of the~H\"older inequality (\ref{au1.3}) 
\begin{align}\label{5.13}
\tau(\abs{T_{k_1}^{n_1}T_{k_2}^{n_2}T_{k_3}^{n_3}})
&\le(\tau(\abs{T_{k_1}}^{d-1}))^{\frac{n_1-1}{d-1}}
(\tau(\abs{T_{k_1}T_{k_2}}^{d-1}))^{\frac 1{d-1}}\notag\\
&\times(\tau(\abs{T_{k_2}}^{d-1}))^{\frac{n_2-2}{d-1}}(\tau(\abs{T_{k_2}T_{k_3}}^{d-1}))^{\frac 1{d-1}}
(\tau(\abs{T_3}^{d-1}))^{\frac{n_3-1}{d-1}}
<\infty.
\end{align}
Now  for any positive integers $k_1\ne k_2\ne\dots\ne k_s,\,s\ge 4$, 
and any positive integers $n_1,n_2,\dots,n_s$ such that $n_1+n_2+\dots+n_s=d+1$
we can write
\begin{equation}\notag
T_{k_1}^{n_1}T_{k_2}^{n_2}T_{k_3}^{n_3}T_{k_4}^{n_4}\dotsm T_{k_s}^{n_s}=
T_{k_1}^{n_1-1}(T_{k_1}T_{k_2})T_{k_2}^{n_2-1}T_{k_3}^{n_3-1}
(T_{k_3}T_{k_4})T_{k_4}^{n_4-1}
\dotsm T_{k_s}^{n_s}.
\end{equation} 
Repeating  the~previous arguments, we easily obtain 
\begin{align}\label{5.14}
\tau(\abs{T_{k_1}^{n_1}T_{k_2}^{n_2}T_{k_3}^{n_3}T_{k_4}^{n_4}
\dotsm T_{k_s}^{n_s}})&\le(\tau(\abs{T_{k_1}}^{d-1}))^{\frac{n_1-1}{d-1}}
(\tau(\abs{T_{k_1}T_{k_2}}^{d-1}))^{\frac 1{d-1}}(\tau(\abs{T_{k_2}}^{d-1}))^{\frac{n_2-1}{d-1}}\notag\\
&\times(\tau(\abs{T_{k_3}}^{d-1}))^{\frac{n_3-1}{d-1}}(\tau(\abs{T_{k_3}T_{k_4}}^{d-1}))^{\frac 1{d-1}}
(\tau(\abs{T_{k_4}}^{d-1}))^{\frac{n_4-1}{d-1}}\notag\\
&\times(\tau(\abs{T_{k_5}}^{d-1}))^{\frac{n_5}{d-1}}\dotsm
(\tau(\abs{T_{k_s}}^{d-1}))^{\frac{n_s}{d-1}}<\infty. 
\end{align}

The assertion of the~lemma follows from (\ref{5.9})--(\ref{5.11a}), (\ref{5.13}) and (\ref{5.14}).
}\end{proof}

{\bf Proof of Theorem~$\ref{th2.2}$}.
We need to prove that under the assumptions of Theorem~$\ref{th2.2}$
if the~forms $L$ and $Q$ are free, then $\tau(\abs{T_1}^s)<\infty$ for 
all $s\in\mathbb N$.

Consider the~free elements $L$ and $Q$ of the~probability 
space $(\mathcal A,\tau)$. 

In the~first step we shall prove that $\tau(\abs{T_1}^3)<\infty)$. 
Write the~relation
\begin{align}\label{5.3}
QL=\sum_j a_{jj}b_jT_j^3+\sum_{j\ne k}(a_{jj}b_kT_j^2T_k+a_{jk}b_kT_jT_k^2)
+\sum_{j\ne k,\,k\ne l}a_{jk}b_lT_jT_kT_l.
\end{align}

By the~Minkowski inequality (\ref{au1.4}), we see that 
\begin{equation}\notag
(\tau(L^2))^{1/2}
\le\sum\abs{b_j}\tau(\abs{T_j}^2)^{1/2}<\infty. 
\end{equation}
Since, by (\ref{au1.3}), $(\tau(\abs{T_jT_k}))^2
\le \tau(T_j^2)\tau(T_k^2)<\infty,j,k=1,2,\dots,n$,
we have, by the~Minkowski inequality (\ref{au1.4}) again, 
\begin{equation}\notag
\tau(\abs{Q})\le\sum_{j,k}^n\abs{a_{jk}}\tau(\abs{T_jT_k})\le \sum_{j}^n\abs{a_{jj}}\tau(\abs{T_j}^2)
+\sum_{j\ne k}^n\abs{a_{jk}}(\tau(\abs{T_j}^2))^{1/2}\,(\tau(\abs{T_k}^2))^{1/2}<\infty. 
\end{equation}
This means that $L$ has finite second moment
and $Q$ has finite first moment.

Since $\abs{QL}^2=QL^2Q$, we note that
$\mu_{\abs{QL}^2}=\mu_{Q^2}\boxtimes\mu_{L^2}$ and $\tau(\abs{QL})=
m_{1/2}(\mu_{Q^2}\boxtimes\mu_{L^2})$. Noting that, 
$m_{1/2}(\mu_{Q^2})<\infty$ and
$m_1(\mu_{L^2})<\infty$, by Lemma~\ref{l2.2}, we arrive at the~inequality
$m_{1/2}(\mu_{\abs{QL}^2})<\infty$. Hence, by Proposition~\ref{pr3.3*}, $\tau(\abs{QL})<\infty$. 

By Lemma~\ref{lem5.1}, we have the following bounds
\begin{equation}\label{5.3*}
\tau(\abs{T_kT_j^2})<\infty,\,\tau(\abs{T_k^2T_j})<\infty,\,j\ne k,\quad\text{and}
\quad \tau(\abs{T_jT_kT_l})<\infty,\,j\ne k\ne l.
\end{equation}



Return to (\ref{5.3}). Using the~Minkowski inequality (\ref{au1.4})
and (\ref{5.3*}) we obtain from (\ref{5.3}) that
\begin{align}\label{5.4}
\tau
(
\Bigabs{\sum_j a_{jj}b_jT_j^3}
)
&\le \tau(\abs{QL})+\sum_{j\ne k}\abs{b_k}
\bigl(
\abs{a_{jj}}\tau(\abs{T_j^2T_k})+\abs{a_{jk}}\tau(\abs{T_jT_k^2})
\bigr)\notag\\
&+\sum_{j\ne k,\,k\ne l}\abs{a_{jk}b_l}\tau(\abs{T_jT_kT_l})<\infty.
\end{align}
By Lemma~\ref{l2.3}, we conclude from this bound that 
$\tau(\abs{T_1}^3)<\infty$ as was to be proved.

Now assume that $\tau(\abs{T_j}^{d})<\infty$ for $d\ge 3$. 
We have, by the~Minkowski inequality (\ref{au1.4}) 
that
\begin{equation}\notag
(\tau(\abs{L}^{d}))^{1/d}\le \sum_{j}\abs{b_j}(\tau(\abs{T_j}^d))^{1/d}<\infty. 
\end{equation}
In addition, for $p=3,4$, we have, by Lemma~\ref{l2.2} and Proposition~\ref{pr3.3*},
\begin{equation}\notag
\tau(\abs{T_jT_k}^{p/2})=m_{p/4}(\mu_{T_j^2}\boxtimes\mu_{T_k^2})<\infty.
\end{equation}
Therefore, for $p=3,4$,
\begin{equation}\notag
\tau(\abs{Q}^{p/2})\le \sum_{j}^n\abs{a_{jj}}(\tau(\abs{T_j}^p))^{2/p}+
\sum_{j\ne k}^n\abs{a_{jk}}(\tau(\abs{T_jT_k}^{p/2}))^{2/p}<\infty,
\end{equation}
if $\tau(\abs{T_j}^p)<\infty$ for $p=3,4$, respectively.



Let $d=3$. In view of the~inequalities 
$m_{3/4}(\mu_{Q^2})<\infty$
and $m_{3/4}(\mu_{L^4})<\infty$, 
by Lemma~\ref{l2.2}, 
we arrive at the~inequality $m_{9/16}(\mu_{\abs{QL^2}^2})=m_{9/16}
(\mu_{Q^2}\boxtimes\mu_{L^4})<\infty$. Therefore, by Proposition~\ref{pr3.3*},
$\tau(\abs{QL^2})<\infty$.  

Let $d\ge 4$. 
Since 
$m_{1}(\mu_{Q^2})<\infty$ and
$m_{1/2}(\mu_{L^{2(d-1)}})<\infty$, by Lemma~\ref{l2.2}, 
we arrive at the~inequality
$m_{1/2}(\mu_{Q^2}\boxtimes\mu_{L^{2(d-1)}})<\infty$. 
Hence, by Proposition~\ref{pr3.3*}, $\tau(\abs{QL^{d-1}})=m_{1/2}(\mu_{Q^2}\boxtimes\mu_{L^{2(d-1)}})<\infty$.

Consider the~relation
\begin{equation}\label{5.7*}
QL^{d-1}=
\sum_j a_{jj}b_j^{d-1}T_j^{d+1}+\sum_{s=2}^{d+1}
\sum \alpha_{k_1k_2\dotsm k_s}T_{k_1}^{n_1}T_{k_2}^{n_2}\dotsm T_{k_s}^{n_s},
\end{equation}
where the~summation in sum of the~second summand 
on the~right-hand side of (\ref{5.7*})
is taken over all positive integers $k_1\ne k_2\ne\dots\ne k_s$ 
such that $k_j=1,2,\dots,n$,
and any positive integers $n_1,n_2,\dots,n_s$ such that $n_1+n_2+\dots+n_s=d+1$, 
and $\alpha_{k_1k_2\dotsm k_s}$ are real coefficients.

By Lemma~\ref{lem5.1}, we see that, for the considered values of 
$k_j$ and $n_j$,
\begin{equation}\label{5.8*}
\tau(\abs{T_{k_1}^{n_1}T_{k_2}^{n_2}\dotsm T_{k_s}^{n_s}})<\infty.
\end{equation}
Using the~Minkowski inequality (\ref{au1.4})
and (\ref{5.8*}) we obtain from (\ref{5.7*}) that
\begin{equation}\notag
\tau(\Bigabs{\sum_j a_{jj}b_j^{d-1}T_j^{d+1}})\le\tau(\abs{QL^{d-1}}
+\sum_{s=2}^{d+1}\sum \abs{\alpha_{k_1k_2\dotsm k_s}}\tau(\abs{T_{k_1}^{n_1}
T_{k_2}^{n_2}\dotsm T_{k_s}^{n_s}})<\infty. 
\end{equation}
Now, by Lemma~\ref{l2.3}, we conclude that $\tau(\abs{T_1}^{d+1})<\infty$.

Thus, induction may be used and the~theorem is proved.
$\square$

{\bf Proof of Theorem~$\ref{th2.1}$}.
Let the free random variables $T_1,T_2,\dots,T_n$ satisfy the assumptions of Theorem~\ref{th2.1}.
Then, as it is easy to see, the free random variables $T_1,T_2,\dots,T_n$ satisfy the assumptions of
Theorem~\ref{th2.2} as well. By this theorem $\tau(\abs{T_j}^k)<\infty,\,k\in\IN,\,j=1,2,\dots,n$.
Noting that the~arguments of the~paper~\cite{Le:2003} hold
for free identically distributed random variables with
finite moments of all order, we obtain the desired result
repeating step by step these arguments (see \cite{Le:2003}, p. 416--418). 
$\square$

          






\end{document}